% SIAM Article Template
\documentclass[final,onefignum,onetabnum]{siamart190516}

% Information that is shared between the article and the supplement
% (title and author information, macros, packages, etc.) goes into
% ex_shared.tex. If there is no supplement, this file can be included
% directly.

%%%%%%%%%%%%%%%%%%%%%%%%%%%%%%%%%%%%%%%%%%%%%%%%%%%%%%
%%%%%%%%%%%% The beginning of the header %%%%%%%%%%%%%
%%%%%%%%%%%%%%%%%%%%%%%%%%%%%%%%%%%%%%%%%%%%%%%%%%%%%%

% SIAM Shared Information Template
% This is information that is shared between the main document and any
% supplement. If no supplement is required, then this information can
% be included directly in the main document.

% Packages and macros go here
\usepackage{lipsum}
\usepackage{amsfonts}
\usepackage{graphicx}
\usepackage{epstopdf}
\usepackage[algo2e,linesnumbered,vlined]{algorithm2e}
\usepackage{amssymb}
\usepackage{subfig}
\usepackage{adjustbox}

\usepackage{tikz,pgfplots,pgfplotstable}
\pgfplotsset{compat=1.16}
\usetikzlibrary{arrows}

\usepackage{array}
\newcolumntype{P}[1]{>{\centering\arraybackslash}p{#1}}
\allowdisplaybreaks

\ifpdf
  \DeclareGraphicsExtensions{.eps,.pdf,.png,.jpg}
\else
  \DeclareGraphicsExtensions{.eps}
\fi

% Add a serial/Oxford comma by default.

% Used for creating new theorem and remark environments
\newsiamremark{remark}{Remark}
\newsiamremark{hypothesis}{Hypothesis}
\newsiamremark{example}{Example}
\crefname{hypothesis}{Hypothesis}{Hypotheses}
\newsiamthm{claim}{Claim}

% Change the font of algorithm titles.
\makeatletter
\renewcommand{\ALG@name}{\sc Algorithm}
\makeatother

% Sets running headers as well as PDF title and authors
\headers{Learning Symbolic Expressions}{J. Kim, S. Leyffer, and P. Balaprakash}

% Title. If the supplement option is on, then "Supplementary Material"
% is automatically inserted before the title.
\title{Learning Symbolic Expressions: Mixed-Integer Formulations, Cuts, and Heuristics\footnote{Argonne National Laboratory, Preprint ANL/MCS-P9445-0212.}}

% Authors: full names plus addresses.
\author{Jongeun Kim\thanks{Industrial and Systems Engineering, University of Minnesota Twin Cities, Minneapolis, MN 55455, USA
  (\email{kim00623@umn.edu}).}
\and Sven Leyffer\thanks{Mathematics and Computer Science Division, Argonne National Laboratory, Lemont, IL 60439, USA 
  (\email{leyffer@mcs.anl.gov}, \email{pbalapra@anl.gov}).}
\and Prasanna Balaprakash\footnotemark[2]}

\usepackage{amsopn}

%%% Local Variables: 
%%% mode:latex
%%% TeX-master: "ex_article"
%%% End: 

%%%%%%%%%%%%%%%%%% Added by authors %%%%%%%%%%%%%%%%%%

% \DeclareGraphicsRule{.pdftex}{pdf}{.pdftex}{}

% \usepackage{silence}
% \WarningFilter[]{hyperref}{Token not allowed in a PDF string}

% \usepackage[colorlinks]{hyperref}
% \hypersetup{
%   colorlinks=true,
%   linkcolor=blue,
%   citecolor=blue,
% }

% ... for easier printing/reading/comments
%% \renewcommand{\baselinestretch}{1.2}

% ... number equations within sections
% \renewcommand{\theequation}{\thesection.\arabic{equation}}

% ... theorems etc
% \newtheorem{definition}{Definition}[section]
% \newtheorem{theorem}{Theorem}[section]
% \newtheorem{lemma}{Lemma}[section]
% \newtheorem{proposition}{Proposition}[section]
% \newtheorem{corollary}{Corollary}[section]
% \newtheorem{remark}{Remark}[section]
% \newtheorem{assumption}{Assumption}[section]
% \newtheorem{property}{Property}[section]

% ... mathematical symbols / macros
\newcommand{\R}{\mbox{I}\!\mbox{R}}

\newcommand{\be}{\begin{equation}}
\newcommand{\ee}{\end{equation}}
\newcommand{\bea}{\begin{eqnarray}}
\newcommand{\eea}{\end{eqnarray}}

\newcommand{\bvec}{\left(\begin{array}{c}}
\newcommand{\evec}{\end{array}\right)}
\newcommand{\bsub}{\begin{subequations}}
\newcommand{\esub}{\end{subequations}}

% ... some commands for this paper

\newcommand{\mc}[3]{\multicolumn{#1}{#2}{#3}}

\mathchardef\mhyphen="2D

% \def\theequation{\thesection.\arabic{equation}}

%%%%%%%%%%%%%%%%%%%%%%%%%%%%%%%%%%%%%%%%%%%%%%%%%%%%%%
%%%%%%%%%%%%%%% The end of the header %%%%%%%%%%%%%%%%
%%%%%%%%%%%%%%%%%%%%%%%%%%%%%%%%%%%%%%%%%%%%%%%%%%%%%%

% Optional PDF information
% \ifpdf
% \hypersetup{
%   pdftitle={Learning Symbolic Expressions: Mixed-Integer Formulations, Cuts, and Heuristics},
%   pdfauthor={J. Kim, S. Leyffer, and P. Balaprakash}
% }
% \fi

% The next statement enables references to information in the
% supplement. See the xr-hyperref package for details.

% \externaldocument{ex_supplement}

% FundRef data to be entered by SIAM
%<funding-group specific-use="FundRef">
%<award-group>
%<funding-source>
%<named-content content-type="funder-name"> 
%</named-content> 
%<named-content content-type="funder-identifier"> 
%</named-content>
%</funding-source>
%<award-id> </award-id>
%</award-group>
%</funding-group>

\begin{document}

\maketitle

% REQUIRED
\begin{abstract}
In this paper we consider the problem of learning a regression function without assuming its functional form.
This problem is referred to as  symbolic regression.
An expression tree is typically used to represent a solution function, which is determined by assigning operators and operands to the nodes.
The symbolic regression problem can be formulated as a nonconvex mixed-integer nonlinear program (MINLP),
where binary variables are used to assign operators and nonlinear expressions are used to propagate 
data values through nonlinear operators such as square, square root, and exponential.
We extend this formulation by adding new cuts that improve the solution of this challenging MINLP.
We also propose a heuristic that iteratively builds an expression tree by solving a restricted MINLP.
We perform computational experiments 
and compare our approach with a mixed-integer program-based method and a neural-network-based method from the literature.
\end{abstract}

% A mixed-integer programming formulation naturally arises to represent this assignment. 
% Furthermore, the formulation must be nonlinear since the operator set may include 
% We propose a mixed-integer nonlinear programming formulation.

% REQUIRED
\begin{keywords}%
    Symbolic regression, mixed-integer nonlinear programming, local branching heuristic, expression tree.
\end{keywords}

% REQUIRED
\begin{AMS}
    90C11, 	% MIP
    90C26,  % Nonconvex Opt, Global Opt
    90C27,  % Combinatorial Opt
    90C30.   % NLP
\end{AMS}

\section{Introduction}
\label{sec:sr-intro}

We consider the problem of learning symbolic expressions, which is referred to as \emph{symbolic regression}.
Symbolic regression is a form of regression that learns functional expressions from observational data. 
Unlike traditional regression, symbolic regression does not assume a fixed functional form but
instead learns the functional relationship and its constants. Given observational data in terms of
independent variables, $x_i \in \R^d$, and dependent variables (function values), $z_i \in \R$, for
$i=1,\ldots,n_{\text{data}}$,
symbolic regression aims to find the  best functional form that maps the $x$-values to the $z$-values by
solving the following optimization problem:
\begin{equation}
    \label{eq:SymbRegr}
    \min_{f \in {\cal F}} \; \sum_{i=1}^{n_{\text{data}}} \left( f(x_i) - z_i \right)^2,
\end{equation}
where ${\cal F}$ is the space of functions from which $f$ is chosen. We note that other loss functions
involving general norms are also possible and that, in general, problem \cref{eq:SymbRegr} is an infinite-dimensional
optimization problems.
Various applications of symbolic regression have been presented in different fields including 
materials science \cite{wang2019symbolic}, 
fluid systems \cite{duriez2017machine},
physics \cite{Schmidt81,udrescu2020ai2,udrescu2020ai}, 
and civil engineering \cite{tarawneh2019intelligent}.

Symbolic regression is especially useful when we do not know the precise functional form that 
relates the independent variables $x$ to the dependent variables $z$ or when we wish to exploit the
freedom of optimally choosing the functional form.
Given data $(x_i,z_i) \in \R^{d+1}$, $i=1,\ldots,n_{\text{data}}$ 
and a set of mathematical operators, symbolic regression searches for a best-fit mathematical expression as a combination of 
these operators, independent variables, and constant. 
Given suitable restrictions on the function
space ${\cal F}$ (e.g., a finite set of mathematical operators), we can formulate \cref{eq:SymbRegr} as a 
nonconvex mixed-integer nonlinear program (MINLP).
In this paper, we describe new cutting planes to enhance the MINLP formulation, develop new heuristics to
solve the resulting MINLP, and demonstrate the effectiveness of our approach on a broad set of test problems.

The remainder of this paper is organized as follows. 
In the rest of this section, we review some background material on expression trees and the literature on symbolic regression. 
%In \cref{sec:sr-formulation}, we discuss existing MINLP formulations for symbolic regression, and derive new sets of cutting planes.
In \cref{sec:sr-formulation}, we introduce a MINLP formulation that improves the existing formulations by adding new sets of cutting planes.
In \cref{sec:sr-heuristic}, we introduce the sequential tree construction heuristic for solving the MINLPs arising in symbolic regression. 
We demonstrate the effectiveness of our ideas in a detailed numerical comparison in \cref{sec:sr-exp}, before concluding with some final remarks in \cref{sec:sr-conclusion}.

% In this paper, we propose a method for SR which returns a better solution in a limited time.
% The method can be used solely or in combination of other method such as AI Feynman heuristic since it needs to solve many small problems in the end.
% In \cref{sec:sr-formulation}, we propose an MINLP formulation which is an improved version of \cite{cozad2018global}.
% In \cref{sec:sr-heuristic}, we introduce our local branching heuristic.
% In \cref{sec:sr-exp}, we test the proposed heuristic.
% Then, we conclude our paper in \cref{sec:sr-conclusion}.

%%%%%%%%%%%%%%%%%%%%%%%%%%%%%%%%%%%%%%%%%%%%%%%%%%%%%%%%%%%%%%%%%%%%%%%%%%%%%%%%%%%%%%%%%%%%%%%%%%%%%%%%%%%%%%%%%%
\subsection{Review of Expression Trees}

A mathematical expression can be represented by an expression tree.
\Cref{Fig:Expr} shows an expression tree of the pendulum formula (in general, an expression tree is 
not unique).
An expression tree can be constructed by assigning an operand (an independent variable $(x_j)$ or constant (cst)) or an operator ($+$, $-$, $*$, $/$, $\exp$, $\log$, $(\cdot)^2$, $(\cdot)^3$, $\sqrt{}$, etc.) to the nodes on a tree.

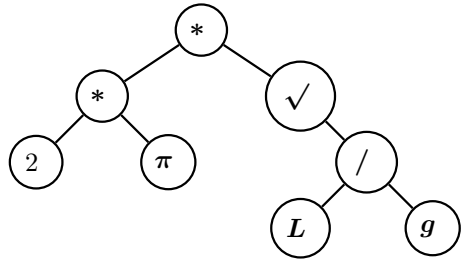
\begin{figure}[htb]
    \centering
    \begin{tikzpicture}
    [scale=0.5,
    every node/.style = {shape=circle, align=center, draw, thick, line width=0.3mm, font={\boldmath}},
    edge from parent/.style = {draw, line width=0.3mm},
    level distance=5em,
    ]
    \tikzstyle{level 1}=[sibling distance=15em]
    \tikzstyle{level 2}=[sibling distance=10em]
    \tikzstyle{level 3}=[sibling distance=10em]
    \node [] { $*$ }
        child [] { node [] { $*$ } 
            child [] { node [] { 2 }
                }
            child [] { node [] { $\pi$ } 
                } 
            }
        child [] { node [] { $\sqrt{}$ } 
            child [missing] { node [] { }
                }
            child [] { node [] { $/$ } 
                child [] { node [] { $L$ }
                    }
                child [] { node [] { $g$ } 
                    } 
                } 
            };
    \end{tikzpicture}
    \caption{An expression tree of the pendulum formula, $T=2\pi\sqrt{\frac{L}{g}}$.}
    \label{Fig:Expr}
\end{figure}

The objective in \cref{eq:SymbRegr} is to minimize the empirical loss, in other words, to maximize the accuracy of the expression.
Additionally, the objective function may include a regularization term modeling the complexity of the expression
to obtain a simple expression.
A common way to measure the complexity of symbolic regression is to calculate the number of nodes in an expression tree.
We review in \cref{sec:sr-formulation} how we can formulate \cref{eq:SymbRegr} by modeling expression
trees using binary variables. 

% \begin{definition}[Factorable Function]
%   $f(x)$ is {factorable} iff expressed as sum of products of unary functions 
%   of a finite set $\mathcal O_{\textrm{unary}} = \{\sin,\cos,\exp,\log,|\cdot|\}$ whose 
%   arguments are variables, constants, or other functions, which are factorable.
% \end{definition}

% \begin{itemize}
% \item Combination of functions from set of operators 
%   \[ \mathcal O = \{+,*,/,\hat{},\sin,\cos,\exp,\log,|\cdot|\} \]
% \item Excludes integrals $\int_{\xi=x_0}^x h(\xi) d\xi$ and black-box functions
% \item Can be represented as expression trees, see \cref{Fig:Expr}.
% \item Forms basis for automatic differentiation 
%   \& global optimization of nonconvex functions 
%   ... see, e.g. \cite{gebremedhin2005color}
% \end{itemize}

% \paragraph{Research Questions}

% \begin{enumerate}
% \item How good are local MINLP heuristics?
% \item When/where do we need global optimality?
% \end{enumerate}

%%%%%%%%%%%%%%%%%%%%%%%%%%%%%%%%%%%%%%%%%%%%%%%%%%%%%%%%%%%%%%%%%%%%%%%%%%%%%%%%%%%%%%%%%%%%%%%%%%%%%%%%%%%%%%%%%%
\subsection{Methods, Test Problems, and Challenges for Symbolic Regression} 
\label{sec:sr-literature}

Over the past few years, researchers have expressed renewed interest in learning symbolic expressions. Both exact formulations and
solution techniques based on mixed-integer nonlinear programming  and heuristic search techniques have been proposed.
Recently, machine learning methodologies and hybrid techniques have also been proposed. Below, we briefly review
each class of methods as well as test problem collections.

\paragraph{Heuristic Techniques for Symbolic Regression}
Genetic programming  is the most common approach for solving symbolic regression. It 
begins with an initial population of individuals or randomly selected expression trees. 
These trees are compared by a fitness measure and error metrics. 
Individuals with high scores have a higher probability of being selected for the next iteration of crossover, mutation, and reproduction.
%See \cref{Fig:GPSR_operations} and \cref{Fig:GPSR_flowchart}.
Ideas to enhance genetic algorithms have been proposed in \cite{kammerer2020symbolic, kronberger2019cluster} to reduce the search space.
Nicolau and McDermott~\cite{nicolau2020genetic} use prior information of the values of the dependent variable.
The quality of solutions is not stable, however, because genetic algorithms are
a stochastic process, which means that it can generate different solutions for the same input data and the same settings. Kammerer et al.~\cite{kammerer2020symbolic} remark that ``it might produce highly dissimilar solutions even for the same input data.''
% Even if we ran multiple times to find a good solution, it is hard to know whether we already have a good solution or not because it does not provide the dual bound.
% The most common approach in symbolic regression is genetic programming (GP).
%
% \begin{figure}[htb] \centering
%     \subfloat[(a)-(b) parents, (c) child of crossover operation (d) child of mutation operation \cite{wang2019symbolic}. \label{Fig:GPSR_operations}]{
%     \includegraphics[width=0.55\textwidth]{images/GPSR_operations.png}
%     }
%     \subfloat[GP flowchart \cite{wang2019symbolic} \label{Fig:GPSR_flowchart}]{
%     \includegraphics[width=0.35\textwidth]{images/GPSR_flowchart.png}
%     }
% \end{figure}

\paragraph{Exact Mixed-Integer Approaches to Symbolic Regression}
Exact approaches based on the MINLP formulation are deterministic in the sense that they return the same solution if the input data and parameter 
settings are the same. In principle MINLP approaches are exact in the sense that they will recover the global solution of \cref{eq:SymbRegr},
although their runtime may be prohibitively long in practice.
% The default setting both for exact approaches and heuristics is the input data and the set of operators.
MINLP formulations were first proposed in \cite{cozad2014data}, extended
in \cite{cozad2018global, neumann2019new}, and independently studied in \cite{austel2017globally, horesh2016globally}.
MINLP formulations use binary variables to define the expression tree and continuous variables to represent intermediate values for 
each node and each data point; see \cref{sec:sr-formulation}. The resulting optimization problem is a nonlinear, nonconvex MINLP, because it involves nonlinear operators 
such as $*, /, \exp$, and $\log$.
% If the set of operators is $\{+,-,*,\}$, then the problem is MIQCP.
The solution time typically increases exponentially in the maximum depth of the tree.
To limit the runtime of the MINLP solvers, several researchers \cite{austel2017globally, cozad2014data, cozad2018global, neumann2019new} limit the structure of the expression tree 
(usually by limiting its depth) and solve a smaller MINLP. The approaches in \cite{austel2017globally, cozad2014data, cozad2018global} are tested only on noiseless data.
Neumann et al. \cite{neumann2019new} propose an interesting methodology to avoid overfitting: 
(1) they add a constraint to limit the complexity of the expression tree (number of nodes);
(2) they then generate a portfolio of solutions by varying the complexity limitation on the training set; and 
(3) they choose the solution based on the
validation error.
% \cite{austel2017globally, cozad2018global, neumann2019new} only test instances that can be represented by at most 
% depth-three expression trees (or 15 nodes).

\paragraph{Approaches Based on Machine Learning Methodologies} 
An approach based on training a neural network has been proposed in \cite{udrescu2020ai2,udrescu2020ai} and is referred to as 
AI Feynman. 
A key of AI Feynman is to discover functional properties of the overall function using a neural network in order to reduce the search space and decompose the overall symbolic regression problem into smaller subproblems.
% and break down a big problem into small problems.
The method recursively applies dimension reduction techniques (dimensional analysis, symmetry, and separability detection) until the remaining components are simple enough to be detected by polynomial fits or complete enumeration. 
These techniques require knowledge of the units of the independent and the dependent variables in order to perform the dimensional analysis, as well as smoothness of the underlying expression, because the method trains a neural network to evaluate values on missing points.
% There is no specific limitation on the complexity of the target function.
% As long as the recursive application of dimension techniques can reduce the dimension small enough, it would find the function.
A related method is considered in \cite{cranmer2020discovering}, which involves using a graph neural network.

%\sven{Maybe make the following into an example ... and elaborate more on it? Or delete it?}
%\jongeun{At the moment, we confirm that AI Feynman fails when the hidden formula is not very simple (depth $\ge 3$) and the number of observation is not enough (less than ten).
%However, since our approach does not show a clear improvement, we cannot say it is the only problem of AI Feynman.
%I will comment out the following paragraph.}
% These techniques fail, if we cannot sufficiently reduce the dimension, which occurs if the unit of variables are unknown, 
% or if the  the trained neural network is inaccurate and detects no or wrong symmetry/separability.
% Another weak point is that it cannot find an arbitrary constant.
% Suppose the target function is $\frac{u+v}{1+uv/c^2}$ where $c$ is constant and assume that there is no unit available and the user does not provide $c$ as a variable.
% All the dimension reduction techniques do not work in this setting.
% Moreover, since the target function is not polynomial, the only way to find it is via brute-force.
% However, it cannot because brute-force is only done with provided variables. 

\paragraph{Hybrid Techniques for Symbolic Regression}
%Various authors have also proposed hybrid algorithms to solve symbolic regression problems.
%\cite{austel2020symbolic} perform globally optimal symbolic regression on an expression tree up to 
%depth $D$. 
Austel et al. \cite{austel2020symbolic} propose an interesting idea that considers a generalized expression tree instead of an expression tree.
A generalized tree assigns a monomial ($hx_1^{a_1}x_2^{a_2} \cdots x_d^{a_d}$) to each leaf node, instead of a single variable or constant.
Consequently, generalized expression trees can represent a larger class of functions for the same depth.
% Therefore, if the depth is same, then the functional space of a gentree is larger than the one of an expression tree.
% A gentree represents 
% \[
%     L_1, ~\sqrt{L_1}, ~\exp(L_1), ~L_1 + L_2, \cdots.
% \]
% where $L_1$ and $L_2$ are monomials.

The algorithm has two steps.
First, it lists all generalized trees up to depth $D$.
The number of generalized trees is reduced by removing redundant expressions.
For example, both the generalized tree corresponding to the summation of two monomials ($hx_1^{a_1}x_2^{a_2} \cdots x_d^{a_d} + gx_1^{b_1}x_2^{b_2} \cdots x_d^{b_d}$) and the generalized tree corresponding to the subtraction of two monomials ($h'x_1^{a'_1}x_2^{a'_2} \cdots x_d^{a'_d} - g'x_1^{b'_1}x_2^{b'_2} \cdots x_d^{b'_d}$) are equivalent because both can represent the same set of functions; therefore, one of the expressions can be removed from the list.
The list of all generalized trees up to depth one with operators $\{+,-,*,/,\sqrt{}\}$ is
\[
L_1, ~\sqrt{L_1}, ~(L_1+L_2 \equiv L_1 - L_2), ~(L_1 * L_2 \equiv L_1 / L_2),
\]
where $L_1$ and $L_2$ are monomials and $\equiv$ represents that two expressions are equivalent.
Second, the algorithm solves optimization problems to find a global solutions for each generalized tree.
For example, the optimization problem of $L_1+L_2$ is 
% \begin{subequations}
\begin{align}
    % \min_{h,a_1,\dots,a_d} & \sum_{i=1}^{n_{\text{data}}} \left(z_i - hx_{i,1}^{a_1}x_{i,2}^{a_2} \cdots x_{i,d}^{a_d}\right)^2, && (L_1)\\
    % \min_{h,a_1,\dots,a_d} & \sum_{i=1}^{n_{\text{data}}} \left(z_i - \sqrt{hx_{i,1}^{a_1}x_{i,2}^{a_2} \cdots x_{i,d}^{a_d}}\right)^2, && (\sqrt{L_1})\\
    \min_{h, a_1,\dots,a_d, g, b_1,\dots,b_d} & \sum_{i=1}^{n_{\text{data}}} \left(z_i - \left(hx_{i,1}^{a_1}x_{i,2}^{a_2} \cdots x_{i,d}^{a_d} + gx_{i,1}^{b_1}x_{i,2}^{b_2} \cdots x_{i,d}^{b_d}\right)\right)^2 ,
    %, && (L_1 + L_2)\\
    % & \vdots
\end{align}
where $h,g$ are bounded continuous variables and $a,b$ are bounded integer variables.
Even though the problem assumes that the tree structure of the generalized expression tree is given, it is a mixed-integer nonlinear (nonconvex) programming problem that is in NP-hard.
Constraints are added based on the knowledge of the units of the independent and the dependent variables to reduce the search space.

% The decision variables of the optimization is constant $h$ and exponents $a_1, a_2, \dots, a_n$ for monomials.
% The problem is a MINLP since they assume $a_i \in \mathbb{Z}$.
% More on that, inspired by \cite{udrescu2020ai}, they assume that the units of variables are given.
% There are constraints that restrict exponents so that the solution function matches the unit of the dependent variables, which reduces the searching space.
%\sven{Maybe make the following into an example ... and elaborate more on it? Or delete it?}
%On the other hand, if there is no unit, then this may not be a good idea because the algorithm is just equivalent to solve
%Also, if monomials are simple but the general tree is complex such as $f = \frac{x_1 + x_2 + x_3 + x_4}{y_1 + y_2 + y_3 + y_4}$, then there is no benefit of this approach.
%To find function $f$, it needs to solve a MINLP with gentree $\frac{L_1 + L_2 + L_3 + L_4}{L_5 + L_6 + L_7 + L_8}$.

%GAIL - the following is not one of the classes of methods to be reviewed, so using the same paragraph macro is misleading. Perhaps make this 1.2.1 and Challenges 1.2.2 DONE (see above start of subsubsectio)
\paragraph{Benchmark Problems for Symbolic Regression}
Several test problem collections have been produced to test symbolic regression ideas. For example, ALAMO, the Automatic Learning of Algebraic Models, 
is a software package for symbolic regression; see  \url{http://minlp.com/alamo}. The test set of \cite{cozad2018global} (see \url{http://minlp.com/nlp-and-minlp-test-problems}) has 24 instances. 
% has an average of 7162 constraints, 333 variables, 56 binaries, 24490 nonzeros, and 1363 nonlinear nonzeros.
In \cite{austel2017globally}, the authors consider Kepler's law $d = \sqrt[3]{c\tau^2 (M + m)}$ and the period of the pendulum 
$\tau = 2\pi \sqrt{\ell / g}$. A set of examples from \cite{white2013better} is available at \url{http://gpbenchmarks.org}.
A number of test problems are also in \cite{udrescu2020ai2, udrescu2020ai} and are available at \url{https://space.mit.edu/home/tegmark/aifeynman.html}.

\paragraph{The Challenges of Symbolic Regression}

Solving symbolic regression problems such as \cref{eq:SymbRegr} has been shown to be challenging; see, for example,\cite{austel2020symbolic, austel2017globally, cozad2018global,neumann2019new}. 
In particular, the problem complexity increases with the number of operators and the number of operator types.
For example, the number of expression trees represented by $d$ independent variables and $B$ binary operators with a maximum depth $D$ is more than $(B \cdot d)^{2^{D}}/B$.\footnote{Let $T_{\delta}$ denote the number of expression trees up to depth $\delta$.
$T_D \ge (B \cdot d)^{2^{D}}/B$ is derived from the system of $T_0 \ge n$ and $T_{\delta} \ge B \cdot T_{\delta-1}^2$. The equality holds if we allow an expression tree to use only binary operators and the independent variables.} 
Another challenge is the non-convexity of the problem, which remains even if the expression tree is fixed because the problem with a fixed expression tree is equivalent to optimize parameters of an arbitrary functional form. 

\section{An Improved MINLP Formulation of Symbolic Regression} \label{sec:sr-formulation}

We review and improve a MINLP formulation that searches an expression tree with the minimum training error given data points $(x_{i,1},\dots,x_{i,d},z_i) \in \R^{d+1}$ for $i=1,\dots,n_{\text{data}}$.
Our formulation improves the one proposed by \cite{cozad2018global}.
The new constraints remove equivalent expression trees and tighten the feasible set of the relaxation.

\subsection{Notation}
A \emph{relaxation} refers to the relaxation obtained by relaxing binary variables.
% $\left(\in \{0,1\}\right)$ to $\in [0,1]$.
We let $[a] := \{1,2,\dots,a\}$ for $a \in \mathbb{Z}_{++}$, the set of positive integers.

\subsection{Inputs}
% The formulation searches an expression tree among a finite set of expression trees.
We are given a set of operators and a set of nodes that define the superset of all feasible expression trees.
By limiting the number of nodes and operands used to construct the expression tree, we transform the infinite-dimensional problem \cref{eq:SymbRegr} into a finite-dimensional problem.
We denote by $\mathcal{P} \subseteq \{+, -, *, /, \sqrt{}, \exp, \log, \dots\}$ a set of operators.
To streamline our presentation, we define the set of binary operators $\mathcal{B} := \mathcal{P} \cap \{+, -, *, /\}$, the set of unary operators $\mathcal{U} := \mathcal{P} \cap \{\sqrt{}, \exp, \log\}$, and the set of operands $\mathcal{L} = \{x_1,\dots,x_d,\text{cst}\}$, where cst is a constant.
We denote the set of all operators and operands by $\mathcal{O} := \mathcal{B} \cup \mathcal{U} \cup \mathcal{L}$.

We identify each node of the tree by an integer, and we let $\mathcal{N}$ denote the set of all nodes in the tree.
We 
%GAIL Globally Optimal Symbolic Regression
denote the children of node $n \in \mathcal{N}$ by $2n$ and $2n+1$, respectively.
We assume $1 \in \mathcal{N}$, which is the root of the tree.
% The left and right children of node $n$ is $2n$ and $2n+1$, respectively.
We denote by $\mathcal{T}$ the set of terminal nodes (that have no child).
We assume that $\mathcal{N}$ corresponds to a full binary tree.\footnote{A full (proper) binary tree is a tree in which every node has zero or two children.}
For example, $\mathcal{N} = \{1,2,3,6,7\}$ is a full binary tree, while $\mathcal{N} = \{1,2,3,4\}$ is not because node 2 has only one child, namely, node 4. 
% The assumption allows that any non-terminal node can be assigned  assign any type of operator to a non-terminal node.
Provided that the set of operators $\mathcal{O}$ is finite and we limit the number of nodes, this MINLP formulation transforms the infinite-dimensional functional approximation \cref{eq:SymbRegr} into a finite-dimensional problem.

\subsection{Decision Variables and Individual Variable Restrictions}
There are three types of decision variables.
The binary variable $y^o_n$ is one if operator $o$ is assigned to node $n$, and zero otherwise.
Variable $c_n$ is the constant value at node $n$ if node $n$ exists, and  zero otherwise.
Therefore, $y^o_n$ and $c_n$ determine the expression tree.
Variable $v_{i,n}$ represents the intermediate computation value at node $n$ for data point $i$.
In other words, $v_{i,n}$ is the value of the symbolic expression represented by the subtree rooted by node $n$ at data point $i$.
Therefore, $v_{i,1}$ is the value predicted by the expression tree of data point $i$.
All continuous variables are bounded.
To streamline our presentation, we use $n \notin \mathcal{T}$ instead of $n \in \mathcal{N}\setminus \mathcal{T}$ in the following discussions.
We define $\mathcal{Y} := \{(n,o), ~\forall o \in \mathcal{O}, ~\forall n \notin \mathcal{T}\} \cup \{(n,o), ~\forall o \in \mathcal{L}, ~\forall n \in \mathcal{T}\}$, the set of all pairs of node $n$ and operator $o$ such that $o$ can be assigned to $n$.
To summarize, our model has the following set of variables and ranges:
\begin{subequations}
\begin{align}
    & y_{n}^{o} \in \{0,1\}, && \forall (n,o) \in \mathcal{Y}, 
    \label{constr:main:y} \\
    & c_{\text{lo}} \le c_{n} \le c_{\text{up}}, && \forall n \in \mathcal{N}, \label{constr:main:c} \\
    & v_{\text{lo}} \le v_{i,n} \le v_{\text{up}}, && \forall i \in [n_{\text{data}}], ~\forall n \in \mathcal{N}. \label{constr:main:v}
\end{align}
\end{subequations}
% where $\mathcal{Y} = \{(n,o), ~\forall o \in \mathcal{O}, ~\forall n \notin \mathcal{T}\} \cup \{(n,o), ~\forall o \in \mathcal{L}, ~\forall n \in \mathcal{T}\}$.
We assume without loss of generality that $v_{\text{lo}} \le c_{\text{lo}} \le 0 \le c_{\text{up}} \le v_{\text{up}}$.

\subsection{Objective Function}
We minimize the mean of the squared errors
\begin{align}
\min \quad \frac{1}{n_{\text{data}}}\sum_{i=1}^{n_{\text{data}}} (z_i - v_{i,1})^2. \label{obj:main}
\end{align}
Additionally, we might add a regularization term such as $\lambda \sum_{(n,o) \in \mathcal{Y}} y^o_n$, where $\lambda \in \R_+$ is a regularization parameter to promote a sparser expression tree.

\subsection{Constraints}

% We introduce constraints in five subsections.
% We classify the constraints into five categories.
In \cref{sec:treedef,sec:valuedef}, we introduce constraints that are necessary to solve this problem.
In \cref{sec:redundancy,sec:implication,sec:symmetry}, we introduce constraints that remove equivalent expression trees and/or reduce the space of the relaxation, an acton that potentially leads to an improvement in computation.
% We refer tree defining constraints (\cref{sec:treedef}) and value defining constraints (\cref{sec:valuedef}) to necessary constraints because those enforce variables $y,c,v$ to represent a valid expression tree and to compute the correct prediction values corresponding to the solution expression tree.
% We refer redundancy removing constraints (\cref{sec:redundancy}), implication cuts (\cref{sec:implication}), and symmetry breaking constraints (\cref{sec:symmetry}) to optional constraints.
% These constraints remove equivalent expression trees and reduce the space of the relaxation, which lead to an improvement in computation.
% In the following sections, we introduce the constraints for each category and compare with \cite{cozad2018global}'s formulation if they propose constraints for the same purpose.
We compare the constraints with the similar constraints in \cite{cozad2018global} in each section.
For the sake of completeness, the formulation proposed in \cite{cozad2018global} is summarized in \cref{sec:cozad-formulation} in terms of our notations.

\subsubsection{Tree-Defining Constraints}
\label{sec:treedef}

Tree-defining constraints enforce that the assignment of operators and operands results in a valid expression tree.
The constraints consist of \cref{constr:grammar1,constr:grammar2,cozad:grammar1,cozad:grammar2}.
% \cref{constr:grammar1}-\cref{constr:grammar2} and \cref{cozad:grammar1}-\cref{cozad:grammar2}.
In addition to \cref{cozad:grammar1,cozad:grammar2} from \cite{cozad2018global},
we use the following tree-defining constraints: 
\begin{subequations}
\begin{align}
    % \mbox{(Tree-Defining)} 
    &&& \sum_{o \in \mathcal{B} \cup \mathcal{U}} y^o_n = \sum_{o \in \mathcal{O}} y^o_{2n+1}, && n \notin \mathcal{T}, \label{constr:grammar1}\\
    &&& \sum_{o \in \mathcal{B}} y^o_n = \sum_{o \in \mathcal{O}} y^o_{2n}, && n \notin \mathcal{T}. \label{constr:grammar2} % \\
%    &&& \mbox{\cref{cozad:grammar1}, and \cref{cozad:grammar2}.} \nonumber
\end{align}
\end{subequations}
Constraint~\cref{constr:grammar1} enforces that a binary/unary operator is assigned to node $n$ if and only if its right child (node $2n+1$) exists.
Constraint~\cref{constr:grammar2} enforces that a binary operator is assigned to node $n$ if and only if its left child (node $2n$) exists.
Constraint~\cref{cozad:grammar1} forces the assignment of at most one operator to a node.
Constraint~\cref{cozad:grammar2} forces the expression tree to include at least one independent variable.
All four constraints are necessary to obtain an expression tree with valid operator/operand assignments.

In contrast, the tree-defining constraints from \cite{cozad2018global} are \cref{cozad:grammar1}-\cref{cozad:grammar6}.
\cref{fig:proof-grammar} shows two assignments of operators and operands to an expression tree.
Both represent the symbolic expression $x_1$, and
both are feasible in \cref{cozad:grammar3}-\cref{cozad:grammar6}, but only \cref{fig:proof-grammar1} is feasible in \cref{constr:grammar1}-\cref{constr:grammar2}.
\begin{figure}[htb]
    \centering
    \subfloat[]{
        \label{fig:proof-grammar1}
        \begin{tikzpicture}
        [scale=0.5,
        every node/.style = {shape=circle, align=center, draw, line width=0.2mm, minimum size = 7mm
        },
        edge from parent/.style = {draw, line width=0.2mm},
        level distance=5em,
        ]
        \tikzstyle{level 1}=[sibling distance=15em]
        \tikzstyle{level 2}=[sibling distance=10em]
        \tikzstyle{level 3}=[sibling distance=10em]
        \node [] { $x_1$ }
            child [] { node [] { } 
                child [] { node [] {  }
                    }
                child [] { node [] {  } 
                    } 
                }
            child [] { node [] {  } 
                child [] { node [] { }
                    }
                child [] { node [] { } 
                    } 
                };
        \end{tikzpicture}
        }
    \subfloat[]{
        \label{fig:proof-grammar2}
        \begin{tikzpicture}
        [scale=0.5,
        every node/.style = {shape=circle, align=center, draw, line width=0.2mm, minimum size = 7mm
        },
        edge from parent/.style = {draw, line width=0.2mm},
        level distance=5em,
        ]
        \tikzstyle{level 1}=[sibling distance=15em]
        \tikzstyle{level 2}=[sibling distance=10em]
        \tikzstyle{level 3}=[sibling distance=10em]
        \node [] { $x_1$ }
            child [] { node [] { $x_2$ } 
                child [] { node [] {  }
                    }
                child [] { node [] {  } 
                    } 
                }
            child [] { node [] {  } 
                child [] { node [] { }
                    }
                child [] { node [] { $x_3$ } 
                    } 
                };
        \end{tikzpicture}
        }
    \caption{Two equivalent expression trees corresponding to $x_1$. 
    Both are feasible in the \cite{cozad2018global}'s formulation, while only (a) is feasible in our improved formulation.
    }
    \label{fig:proof-grammar}
\end{figure}
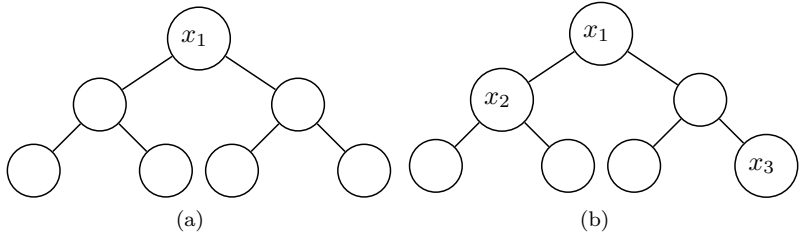
We formalize this observation by showing in \cref{lem:grammer} that our tree-defining constraints system is tighter.

\begin{lemma} \label{lem:grammer}
It holds that 
$\left\{y \in \{0,1\}^{|\mathcal{Y}|} ~|~  \cref{constr:grammar1}, \cref{constr:grammar2}, \cref{cozad:grammar1}, \cref{cozad:grammar2} \right\} \allowbreak \subsetneq \allowbreak \{y \in \{0,1\}^{|\mathcal{Y}|} ~|~  \allowbreak \cref{cozad:grammar1}\mhyphen\cref{cozad:grammar6}\}$.
\end{lemma}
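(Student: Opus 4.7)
The proof splits into showing containment in one direction and exhibiting a separating assignment for strictness. For containment, I would take any binary assignment $y$ satisfying \cref{constr:grammar1}, \cref{constr:grammar2}, \cref{cozad:grammar1}, and \cref{cozad:grammar2}, and verify each of the remaining Cozad constraints \cref{cozad:grammar3}--\cref{cozad:grammar6} in turn. The key observation is that \cref{constr:grammar1} and \cref{constr:grammar2} are equality constraints rather than one-sided implications: for each non-terminal $n$, they simultaneously enforce ``if $n$ carries a binary (resp.\ binary or unary) operator, then node $2n$ (resp.\ $2n+1$) carries some operator or operand'' together with its converse ``if the child carries something, then $n$ carries the corresponding operator class.'' Combined with the at-most-one-operator restriction \cref{cozad:grammar1}, this rigidity should yield each of Cozad's four implicational constraints as an essentially one-line consequence.

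For strictness, I would use the assignment already drawn in \cref{fig:proof-grammar2}: on the depth-three full binary tree with $\mathcal{N} = \{1,\dots,7\}$, set $y^{x_1}_1 = y^{x_2}_2 = y^{x_3}_7 = 1$ and all other components to zero. This is feasible in Cozad's formulation because the orphan operand assignments at terminal nodes $2$ and $7$ do not trigger any one-sided violation when the root carries the operand $x_1$. However, it violates \cref{constr:grammar2} at $n = 1$: the left-hand side $\sum_{o \in \mathcal{B}} y^o_1$ is $0$ since the root carries the operand $x_1$ rather than a binary operator, while the right-hand side $\sum_{o \in \mathcal{O}} y^o_2$ equals $1$ because $y^{x_2}_2 = 1$. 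An analogous mismatch occurs for \cref{constr:grammar1} at $n = 3$, reinforcing that the assignment lies outside the tighter system.

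The main obstacle is the containment direction, which is conceptually routine but requires checking the precise textual form of each Cozad constraint \cref{cozad:grammar3}--\cref{cozad:grammar6} against our equality system. Since our constraints encode both implication directions at once, each Cozad implication should fall out directly; the only care needed is in matching notation and handling the boundary cases where $n$ or its children lie in $\mathcal{T}$, so that the domain $\mathcal{Y}$ correctly restricts which $y^o_n$ variables appear in the relevant sums.
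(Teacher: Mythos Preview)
Your proposal is correct and follows essentially the same route as the paper: derive \cref{cozad:grammar3}--\cref{cozad:grammar6} from the equality constraints \cref{constr:grammar1}--\cref{constr:grammar2} combined with \cref{cozad:grammar1}, and then use the assignment in \cref{fig:proof-grammar2} to witness strictness. One small slip: with $\mathcal{N}=\{1,\dots,7\}$, node~$2$ is \emph{not} terminal (its children $4,5\in\mathcal{N}$), so you should not call it a terminal node---but this does not affect the argument, since the violation you exhibit at $n=1$ for \cref{constr:grammar2} already suffices.
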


\begin{proof}
Let $S := \left\{y ~|~ \cref{constr:main:y}, \cref{constr:grammar1}, \cref{constr:grammar2}, \cref{cozad:grammar1}, \cref{cozad:grammar2} \right\}$ and $T := \{y ~|~ \cref{constr:main:y}, \allowbreak \cref{cozad:grammar1}\mhyphen\cref{cozad:grammar6}\}$.
We first show that $S \subseteq T$.
Pick any point $y \in S$.
We need to show that $y$ satisfies \cref{cozad:grammar3}-\cref{cozad:grammar6}.
Point $y$ satisfies \cref{cozad:grammar3} and \cref{cozad:grammar4} because those are relaxations of \cref{constr:grammar1} and \cref{constr:grammar2}, respectively, by the fact that $\mathcal{B}$ (set of binary operators), $\mathcal{U}$ (set of unary operators), and $\mathcal{L}$ (set of operands) are a partition of $\mathcal{O}$ (set of operators and operands).
We can also show that $y$ satisfies \cref{cozad:grammar5} by
\[
    \sum_{o \in \mathcal{U} \cup \mathcal{L}} y^o_n \le 1 - \sum_{o \in \mathcal{B}} y^o_n = 1 - \sum_{o \in \mathcal{O}} y^o_{2n} ,
\]
where the inequality and the equality hold by \cref{cozad:grammar1} and \cref{constr:grammar2}, respectively.
Similarly, we can show that $y$ satisfies \cref{cozad:grammar6} by 
\[
    \sum_{o \in \mathcal{L}} y^o_n \le 1 - \sum_{o \in \mathcal{B} \cup \mathcal{U}} y^o_n = 1 - \sum_{o \in \mathcal{O}} y^o_{2n+1} ,
\]
where the inequality and the equality hold by \cref{cozad:grammar1} and \cref{constr:grammar1}, respectively.
Therefore, $y$ satisfies all the constraints in $T$, and consequently $S \subseteq T$ holds.

We next show that there exists $y \in T\setminus S$.
Let $\mathcal{N} = [7]$, $y_1^{x_1}=y_2^{x_2}=y_7^{x_3}=1$, and otherwise $y_n^o=0$.
\Cref{fig:proof-grammar2} shows the expression tree corresponding to $y$.
The $y$ satisfies \cref{cozad:grammar1} and \cref{cozad:grammar2}.
Also, $y$ satisfies \cref{cozad:grammar1}--\cref{cozad:grammar2} because every left-hand-side value is zero.
However, $y$ does not satisfy \cref{constr:grammar1} and \cref{constr:grammar2} because $x_2$ and $x_3$ cannot be assigned unless an operator is assigned to their parents.
Therefore, the proof is complete.
\end{proof}

\subsubsection{Value-Defining Constraints}
\label{sec:valuedef}

Value-defining constraints enforce that the value of $v_{i,n}$ is computed based on the solution expression tree and data points.
Specifically, if an operand is assigned to node $n$, then $v_{i,n}$ is equal to the value of the operand.
If a unary operator $\otimes(x)$ is assigned to node $n$, then $v_{i,n}$ is equal to $\otimes(v_{i,2n+1})$.
If a binary operator $\otimes$ is assigned to node $n$, then $v_{i,n}$ is equal to $v_{i,2n}\otimes v_{i,2n+1}$.
Cozaad and Sahinidis \cite{cozad2018global} introduce a value-defining constraint for each data point, each node, and each operator or operand, given in \cref{eqn:nonelb}--\cref{eqn:logdomain} in \cref{sec:form-valueconstr}.
All these constraints are necessary to ensure the correct prediction values for each data point given an expression tree.

We propose a set of improved value-defining constraints, \cref{eqn:varub}--\cref{eqn:varlb} together with \cref{eqn:cstub}--\cref{eqn:logdomain} from \cite{cozad2018global}:
\begin{subequations}
\begin{align}
    % \mbox{(Value-Defining)} 
    &&& v_{i,n} \le \sum_{j=1}^d x_{i,j}y^{x_j}_n + v_{\text{up}}\sum_{o \in \mathcal{B} \cup \mathcal{U} \cup \{\text{cst}\}} y^{o}_n, && \forall i\in [n_{\text{data}}], ~\forall n \in \mathcal{N}, \label{eqn:varub}\\
    &&& v_{i,n} \ge \sum_{j=1}^d x_{i,j}y^{x_j}_n + v_{\text{lo}}\sum_{o \in \mathcal{B} \cup \mathcal{U} \cup \{\text{cst}\}} y^{o}_n, && \forall i\in [n_{\text{data}}], ~\forall n \in \mathcal{N}. \label{eqn:varlb}% \\
%    &&& \mbox{and \cref{eqn:cstub}-\cref{eqn:logdomain}.} \nonumber
\end{align}
\end{subequations}
Our value-defining constraints replace \cref{eqn:noneub}-\cref{eqn:indeplb} with \cref{eqn:varub}-\cref{eqn:varlb}.
Both \cref{eqn:varub}-\cref{eqn:varlb} and \cref{eqn:noneub}-\cref{eqn:indeplb} represent the following disjunction:
\begin{multline}
    \bigvee_{o \in \{x_1,\dots,x_d\}} 
    \left[\begin{array}{c}
        y_{n}^o=1 \\
        v_{i,n} = x_{i,j}, \\
        \forall i \in [n_{\text{data}}]
    \end{array}\right]
    \bigvee
    \left[\begin{array}{c}
        \sum_{o \in \mathcal{O}} y_{n}^o = 0 \\
        v_{i,n} = 0, \\ 
        \forall i \in [n_{\text{data}}]
    \end{array}\right]
    \bigvee
    \left[\begin{array}{c}
        \sum_{o \in \mathcal{B}\cup\mathcal{U}\cup\{\text{cst}\}} y_{n}^o = 1 \\
        v_{\text{lo}} \le v_{i,n} \le v_{\text{up}}, \\ 
        \forall i \in [n_{\text{data}}]
    \end{array}\right], \\
    \forall n \in \mathcal{N}.
\end{multline}
Note that if $\sum_{o \in \mathcal{B}\cup\mathcal{U}\cup\{\text{cst}\}} y_{n}^o = 1$ (i.e., node $n$ exists), then the value of $v_{i,n}$ is determined by \cref{eqn:cstub}--\cref{eqn:logdomain}.
Our formulation reduces the number of constraints.
The number of constraints \cref{eqn:varub}--\cref{eqn:varlb} is $n_{\text{data}}|\mathcal{N}|$, while the number of constraints \cref{eqn:noneub}--\cref{eqn:indeplb} is $n_{\text{data}}|\mathcal{N}|(d+1)$.
We show in \cref{lem:valuedef} that our formulation does not change the feasible set and, in fact, reduces the space of the relaxation.

We first show in  \cref{lem:multichoice} that we can merge $k$ big-$M$ constraints associated with different constant bounds on an identical function.
This merge reduces the space of the relaxation while it does not change the feasible space.
%Then, we use \cref{lem:multichoice} to prove \cref{lem:valuedef}.

\begin{lemma} \label{lem:multichoice}
Let $m$ be a positive integer, $k \in [m]$, $w \in \mathbb{R}^k$, $M >
\max_{i \in [k]} w_i$.
Let $\mathcal{F}_B = \{y \in \{0,1\}^m ~|~ \sum_{i=1}^m y_i = 1\}$ and $\mathcal{F}_C = \{y \in \mathbb{R}_+^m ~|~ \sum_{i=1}^m y_i = 1\}$.
Consider sets $S$ and $T$, where
\begin{align*}
    S & := \{(x,y) \in \mathbb{R} \times \mathbb{R}^n ~|~ f(x) \le \sum_{i \in [k]} w_iy_i + M(1 - \sum_{i \in [k]} y_i)\}, \\
    T & := \{(x,y) \in \mathbb{R} \times \mathbb{R}^n ~|~ f(x) \le w_iy_i + M(1-y_i), ~\forall i \in [k]\}. 
\end{align*}
Then, the following relations hold:
\begin{subequations}
\begin{align}
& \{(x,y) \in S ~|~ y \in \mathcal{F}_B\} = \{(x,y) \in T ~|~ y \in \mathcal{F}_B\}, \label{lem:multichoice1}\\
& \{(x,y) \in S ~|~ y \in \mathcal{F}_C\} \subsetneq \{(x,y) \in T ~|~ y \in \mathcal{F}_C\}. \label{lem:multichoice2}
\end{align}
\end{subequations}
\end{lemma}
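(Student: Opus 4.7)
The plan is to recast both inequality systems in a symmetric form. Setting $\alpha_i := M - w_i > 0$, the single inequality defining $S$ becomes $f(x) \le M - \sum_{i \in [k]} \alpha_i y_i$ and the $k$ inequalities defining $T$ become $f(x) \le M - \alpha_i y_i$ for each $i \in [k]$. In this form the structural contrast is transparent: the $S$-bound aggregates all penalty terms $\alpha_i y_i$, whereas each $T$-constraint penalizes only one coordinate. All subsequent comparisons rely on this single observation.

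For the binary equality \cref{lem:multichoice1}, I would split on the unique index $j$ with $y_j = 1$. If $j \in [k]$, then $\sum_i \alpha_i y_i = \alpha_j$ and the only non-slack $T$-constraint is the $j$-th one, so both systems collapse to $f(x) \le M - \alpha_j = w_j$. If $j \notin [k]$, every coordinate that appears is zero and both systems collapse to $f(x) \le M$. Hence $S$ and $T$ coincide on $\mathcal{F}_B$. For the forward inclusion in \cref{lem:multichoice2}, I use $y_i \ge 0$ together with $\alpha_i > 0$ to conclude $\sum_{i \in [k]} \alpha_i y_i \ge \alpha_j y_j$ for every $j \in [k]$, so the $S$-bound dominates each $T$-bound termwise, and $S \cap \mathcal{F}_C \subseteq T \cap \mathcal{F}_C$ follows.

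To obtain strict inclusion (in the non-degenerate regime $k \ge 2$; for $k = 1$ the two systems coincide by definition and strictness fails), I would produce an explicit separator by setting $y_1 = y_2 = 1/2$ and $y_i = 0$ otherwise. Then $y \in \mathcal{F}_C$, the $T$-bounds reduce to $(w_1+M)/2$, $(w_2+M)/2$, and $M$, while the $S$-bound is the strictly smaller quantity $(w_1+w_2)/2$. Because $M > \max\{w_1,w_2\}$, the interval $\bigl((w_1+w_2)/2,\ \min\{(w_1+M)/2,(w_2+M)/2\}\bigr)$ is nonempty, and any $x_0$ for which $f(x_0)$ lies in it witnesses a point of $T \cap \mathcal{F}_C$ outside $S \cap \mathcal{F}_C$. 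The main delicate point in the argument is ensuring that such an $x_0$ exists, since the lemma places no explicit range hypothesis on $f$; this is automatic in the paper's intended use, where $f$ models a bounded continuous variable that attains every value in its declared bounds.
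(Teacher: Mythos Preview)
Your argument follows essentially the same route as the paper: the same case split over $\mathcal{F}_B=\{e_1,\dots,e_m\}$, the same termwise domination for the forward inclusion, and the same idea of spreading fractional mass over coordinates in $[k]$ to build a separator (the paper uses $\bar y_i=1/m$ for all $i$ rather than your $y_1=y_2=\tfrac12$). Your caveats about $k\ge 2$ and the range of $f$ are well taken; the paper's proof tacitly makes the same assumptions, treating $f$ as the identity and relying on $\sum_{i\in[k]:i\neq j}(M-w_i)>0$, which likewise degenerates when $k=1$.
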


\begin{proof}
The proof of this result is given in \cref{lem:multichoice:proof}.
\end{proof}

Next, we show that the new constraints do not change the feasible set of the MINLP 
but improve its continuous relaxation.

\begin{lemma} \label{lem:valuedef}
Let $\mathcal{F}_B = \{(y,v) \in \{0,1\}^{|\mathcal{Y}|} \times [v_{\text{lo}},v_{\text{up}}]^{n_{\text{data}}|\mathcal{N}|} ~|~ \cref{cozad:grammar1}\}$ and $\mathcal{F}_C = \{(y,v) \in [0,1]^{|\mathcal{Y}|} \times [v_{\text{lo}},v_{\text{up}}]^{n_{\text{data}}|\mathcal{N}|} ~|~ \cref{cozad:grammar1}\}$.
It holds that
\begin{subequations}
\begin{align}
\{(y,v) \in \mathcal{F}_B ~|~ \cref{eqn:varub}\mhyphen\cref{eqn:varlb} \} &= \{(y,v) \in \mathcal{F}_B ~|~ \cref{eqn:noneub}\mhyphen\cref{eqn:indeplb}\}, \label{eqn:valuedef1} \\    
\{(y,v) \in \mathcal{F}_C ~|~ \cref{eqn:varub}\mhyphen\cref{eqn:varlb} \} & \subsetneq \{(y,v) \in \mathcal{F}_C ~|~ \cref{eqn:noneub}\mhyphen\cref{eqn:indeplb}\}. \label{eqn:valuedef2}
\end{align}
\end{subequations}
\end{lemma}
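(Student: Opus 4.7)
The plan is to derive \cref{lem:valuedef} by applying \cref{lem:multichoice} separately at each pair $(i,n) \in [n_{\text{data}}] \times \mathcal{N}$. Fix such a pair and introduce the auxiliary slack $y_n^{\text{none}} := 1 - \sum_{o \in \mathcal{O}} y_n^o$, which by \cref{cozad:grammar1} is nonnegative and, together with the $y_n^o$'s, forms a tuple that sums to one. Under this reparameterization the individual-operand upper bounds from \cite{cozad2018global}, namely $v_{i,n} \le x_{i,j} + v_{\text{up}}(1 - y_n^{x_j})$ for $j \in [d]$ together with $v_{i,n} \le 0 + v_{\text{up}}(1 - y_n^{\text{none}})$ (these correspond to \cref{eqn:indepub} and \cref{eqn:noneub}), take exactly the individual-constraint form $f(x) \le w_\ell y_\ell + M(1-y_\ell)$ from \cref{lem:multichoice}, with $k = d+1$ choices, weights $(x_{i,1}, \ldots, x_{i,d}, 0)$, and big-$M$ value $v_{\text{up}}$.

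For the merged side, substituting the identity $\sum_{o \in \mathcal{B}\cup\mathcal{U}\cup\{\text{cst}\}} y_n^o = 1 - \sum_{j} y_n^{x_j} - y_n^{\text{none}}$ into the right-hand side of \cref{eqn:varub} reproduces the merged form $f(x) \le \sum_\ell w_\ell y_\ell + M(1-\sum_\ell y_\ell)$ verbatim; a symmetric computation with $v_{\text{lo}}$ matches \cref{eqn:varlb} to \cref{eqn:nonelb}--\cref{eqn:indeplb}. With this identification in place, the equality \cref{eqn:valuedef1} follows from \cref{lem:multichoice1} applied at every $(i,n)$: on $\mathcal{F}_B$ the $y$'s are binary, and \cref{cozad:grammar1} places the tuple $(y_n^{x_1}, \ldots, y_n^{x_d}, y_n^{\text{none}})$ in the binary simplex required by \cref{lem:multichoice}. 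The strict inclusion \cref{eqn:valuedef2} follows analogously from \cref{lem:multichoice2}, because on $\mathcal{F}_C$ the relaxed $y$'s only lie in the continuous simplex.

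To certify the strict inclusion concretely, I would take $d \ge 2$, fix any terminal node $n \in \mathcal{T}$, choose a data point with $x_{i,1} = x_{i,2} = 0$, set $y_n^{x_1} = y_n^{x_2} = 1/2$ with all other components of $y$ at $n$ equal to zero, and propose the value $v_{i,n} = v_{\text{up}}/2 > 0$. Each individual big-$M$ upper bound then evaluates to $0 + v_{\text{up}}/2$ or larger, so the point satisfies Cozad and Sahinidis's formulation, whereas the merged constraint \cref{eqn:varub} forces $v_{i,n} \le 0$ and excludes it.

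The main obstacle is bookkeeping rather than mathematics: one must check that the ``none'' slack truly furnishes the SOS1 partition demanded by \cref{lem:multichoice} even though \cref{cozad:grammar1} is an inequality rather than an equality, and that none of the remaining value-defining constraints \cref{eqn:cstub}--\cref{eqn:logdomain}, which are shared by the two formulations, secretly tightens the separator in the witness above. Choosing a terminal node sidesteps the latter issue cleanly, since at such a node the only operand indicators active in the separator are those for independent variables, and the constraints governing $\mathcal{B}$, $\mathcal{U}$ operators are vacuous.
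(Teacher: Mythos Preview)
Your proof is correct and takes essentially the same approach as the paper: introduce the slack $y_n^{\text{none}}$ to turn \cref{cozad:grammar1} into an SOS1 partition, then invoke \cref{lem:multichoice} at each $(i,n)$ to identify \cref{eqn:varub}--\cref{eqn:varlb} with the merged form and \cref{eqn:noneub}--\cref{eqn:indeplb} with the individual big-$M$ constraints. The paper's version is terser and omits an explicit witness, and your worry about interference from \cref{eqn:cstub}--\cref{eqn:logdomain} is unnecessary since those constraints are not part of the sets being compared in the lemma.
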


\begin{proof}
Constraints \cref{cozad:grammar1}, \cref{eqn:varub}--\cref{eqn:varlb}, and \cref{eqn:noneub}--\cref{eqn:indeplb} are all separable in $n \in \mathcal{N}$.
Thus, it is sufficient to show that \cref{eqn:valuedef1} and \cref{eqn:valuedef2} hold for a specific $n$.
% Pick $n \in \mathcal{N}$ and 
% in (\ref{eqn:valuedef1}) and (\ref{eqn:valuedef2}) is 
% separable in $n \in \mathcal{N}$.
% We consider the systems for a specific $n$.
We introduce $y^{\text{none}}_n := 1 - \sum_{(n',o) \in \mathcal{Y} : n' = n} y_n^o$.
By definition, $y^{\text{none}}_n + \sum_{(n',o) \in \mathcal{Y} : n' = n} y_n^o = 1$.
Then, we can merge all ``$v_{i,n} \le \cdots$'' constraints in \cref{eqn:noneub}--\cref{eqn:indeplb} into \cref{eqn:varub} for all $n \in \mathcal{N}$, an action that corresponds to merging constraints in $T$ to the constraint in $S$ in \cref{lem:multichoice}.
Similarly, we merge all ``$v_{i,n} \ge \cdots$'' constraints in \cref{eqn:noneub}--\cref{eqn:indeplb} into \cref{eqn:varlb} for all $n \in \mathcal{N}$.
By \cref{lem:multichoice}, this merge strictly reduces the space of the relaxation while it does not change the feasible set.
Therefore, the proof is complete.
\end{proof}

\subsubsection{Redundancy-Eliminating Constraints}
\label{sec:redundancy}

Redundancy-eliminating constraints exclude redundant operations and remove three kinds of redundancy described in \cref{tab:redundancy}.
\begin{table}[htb]
    \centering
    \begin{tabular}{c|c|c} \hline
    Redundancy type             &  Example & Constraints \\ \hline
    Association property & $x_1 - (x_2 - 3) = x_1 + (3-x_2)$ & \cref{constr:redun2}, \cref{constr:redun3} \\
    Operations on constants     &  $2 = \sqrt{4} = 1.5 + 0.5$ & \cref{constr:redun1}, \cref{cozad:redun4} \\
    Nested operations & $x = e^{\log(x)} = \log(e^x)$ & \cref{cozad:redun5}, \cref{cozad:redun6} \\ \hline
    \end{tabular}
    \caption{Redundancy removing constraints.}
    \label{tab:redundancy}
\end{table}

In addition to the redundancy-eliminating constraints \cref{cozad:redun4}-\cref{cozad:redun6}
from \cite{cozad2018global}, we introduce the following constraints:
\begin{subequations}
\begin{align}
    % \mbox{(Redundancy-Eliminating)} 
    &&& y^{+}_{n} + y^{-}_{2n+1} \le 1, && n \notin \mathcal{N}_{\text{perfect}}, \label{constr:redun2} \\    
    &&& y^{*}_{n} + y^{/}_{2n+1} \le 1, && n \notin \mathcal{N}_{\text{perfect}}, \label{constr:redun3} \\
    &&& y^{\text{cst}}_{2n+1} \le y^+_n + y^*_n, && n \notin \mathcal{T}, \label{constr:redun1}% \\
%    &&& \mbox{and \cref{cozad:redun4}-\cref{cozad:redun6}.} \nonumber
\end{align}
\end{subequations}
where $\mathcal{N}_{\text{perfect}}$ is a set of nodes whose rooted subtree of $\mathcal{N}$ is a perfect binary tree.\footnote{A perfect binary tree is a binary tree in which all nonterminal nodes have two children and all terminal nodes have the same depth.}
Specifically, the constraints exclude all equivalent expressions except the first expression of
the examples in \cref{tab:redundancy}.

Redundancy induced by the association property is not considered in \cite{cozad2018global}, while our redundancy-removing constraints \cref{constr:redun2} and \cref{constr:redun3} exclude such cases.  
For example, all the expression trees in \cref{fig:redun-new12,fig:redun-new34} are feasible in the formulation in \cite{cozad2018global}; however, only (a) is feasible in our formulation.

Constraint~\cref{constr:redun1} allows the right child to be a constant only if $+$ or $*$ is assigned to node $n$.
The constraints  exclude $- C$ and $/ C$ because equivalent expressions $+ (-C)$ and $* (1/C)$ are feasible.
In addition, they exclude $\sqrt{C}$, $\exp{C}$, and $\log{C}$ because we can represent them as a single constant node.  
This type of redundancy is considered in \cite{cozad2018global}, and the corresponding constraints are \cref{cozad:redun1}--\cref{cozad:redun3}.
We show in \cref{lem:redundancy} that the substitution  \cref{constr:redun1} for \cref{cozad:redun1}--\cref{cozad:redun3} results in a tighter relaxation.

\begin{figure}[htb]
    \centering
    \subfloat[$A - (B - C)$]{
        \begin{tikzpicture}
        [scale=0.5,
        every node/.style = {shape=circle, align=center, draw, line width=0.2mm, minimum size = 7mm
        },
        edge from parent/.style = {draw, line width=0.2mm},
        level distance=5em,
        ]
        \tikzstyle{level 1}=[sibling distance=15em]
        \tikzstyle{level 2}=[sibling distance=10em]
        \tikzstyle{level 3}=[sibling distance=10em]
        \node [] { $-$ }
            child [] { node [] { $A$ } 
                }
            child [] { node [] { $-$ } 
                child [] { node [] { $B$ }
                    }
                child [] { node [] { $C$ } 
                    } 
                };
        \end{tikzpicture}
    }
    \subfloat[$A + (C - B)$]{
        \begin{tikzpicture}
        [scale=0.5,
        every node/.style = {shape=circle, align=center, draw, line width=0.2mm, minimum size = 7mm
        },
        edge from parent/.style = {draw, line width=0.2mm},
        level distance=5em,
        ]
        \tikzstyle{level 1}=[sibling distance=15em]
        \tikzstyle{level 2}=[sibling distance=10em]
        \tikzstyle{level 3}=[sibling distance=10em]
        \node [] { $+$ }
            child [] { node [] { $A$ } 
                }
            child [] { node [] { $-$ } 
                child [] { node [] { $C$ }
                    }
                child [] { node [] { $B$ } 
                    } 
                };
        \end{tikzpicture}
    }
    \caption{Two equivalent expression trees with addition and subtraction. Both are feasible in \cite{cozad2018global}, but only (a) is feasible in \cref{constr:redun2}.}
    \label{fig:redun-new12}
\end{figure}
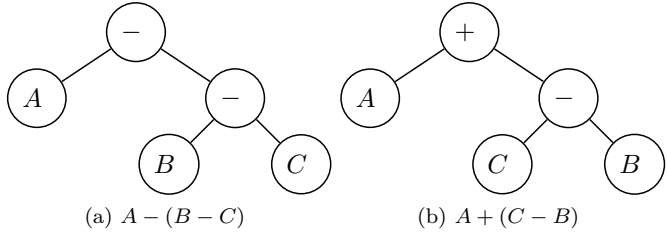

\begin{figure}[htb]
    \centering
    \subfloat[$A / (B / C)$]{
        \begin{tikzpicture}
        [scale=0.5,
        every node/.style = {shape=circle, align=center, draw, line width=0.2mm, minimum size = 7mm
        % , thick
        % , font={\boldmath}
        },
        edge from parent/.style = {draw, line width=0.2mm},
        level distance=5em,
        ]
        \tikzstyle{level 1}=[sibling distance=15em]
        \tikzstyle{level 2}=[sibling distance=10em]
        \tikzstyle{level 3}=[sibling distance=10em]
        \node [] { $/$ }
            child [] { node [] { $A$ } 
                child [missing] { node [] {  }
                    }
                child [missing] { node [] {  } 
                    } 
                }
            child [] { node [] { $/$ } 
                child [] { node [] { $B$ }
                    }
                child [] { node [] { $C$ } 
                    } 
                };
        \end{tikzpicture}
    }
    \subfloat[$A * (C / B)$]{
        \begin{tikzpicture}
        [scale=0.5,
        every node/.style = {shape=circle, align=center, draw, line width=0.2mm, minimum size = 7mm
        },
        edge from parent/.style = {draw, line width=0.2mm},
        level distance=5em,
        ]
        \tikzstyle{level 1}=[sibling distance=15em]
        \tikzstyle{level 2}=[sibling distance=10em]
        \tikzstyle{level 3}=[sibling distance=10em]
        \node [] { $*$ }
            child [] { node [] { $A$ } 
                }
            child [] { node [] { $/$ } 
                child [] { node [] { $C$ }
                    }
                child [] { node [] { $B$ } 
                    } 
                };
        \end{tikzpicture}  
    }
    \caption{Two equivalent expression trees with multiplication and division. Both are feasible in \cite{cozad2018global}, but only (a) is feasible in \cref{constr:redun3}.}
    \label{fig:redun-new34}
\end{figure}
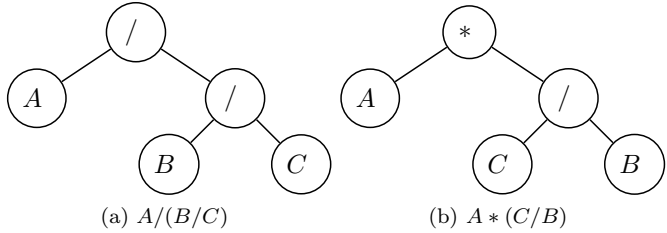

\begin{lemma} \label{lem:redundancy}
Let $\mathcal{F}_B$ and $\mathcal{F}_C$ denote the set of binary and continuous $y$, respectively,
that satisfy the tree-defining constraints, that is,
\begin{subequations}
\begin{align*}
    \mathcal{F}_B & = \{y \in \{0,1\}^{|\mathcal{Y}|} ~|~ \cref{constr:grammar1}, \cref{constr:grammar2}, \cref{cozad:grammar1}, \cref{cozad:grammar2}\}, \\
    \mathcal{F}_C & = \{y \in [0,1]^{|\mathcal{Y}|} ~|~ \cref{constr:grammar1}, \cref{constr:grammar2}, \cref{cozad:grammar1}, \cref{cozad:grammar2}\}.
\end{align*}
\end{subequations}
It follows that 
\begin{subequations}
\begin{align}
        \{y \in \mathcal{F}_B ~|~ \cref{constr:redun1}\} & = \{y \in \mathcal{F}_B ~|~ \mbox{\cref{cozad:redun2}-\cref{cozad:redun4}}\}, \\
        \{y \in \mathcal{F}_C ~|~ \mbox{\cref{constr:redun1}}\} & \subsetneq \{y \in \mathcal{F}_C ~|~ \mbox{\cref{cozad:redun2}-\cref{cozad:redun4}}\}.
\end{align}
\end{subequations}
\end{lemma}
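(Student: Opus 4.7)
The plan is to exploit that every constraint in the lemma is separable in the node index $n$, so both the binary equality and the continuous strict inclusion can be verified one node at a time. I fix an arbitrary non-terminal node $n$; the base tree-defining constraints \cref{constr:grammar1}--\cref{constr:grammar2} and the at-most-one constraint \cref{cozad:grammar1} then govern the local coupling between $y^o_n$, $y^o_{2n}$, and $y^o_{2n+1}$. The Cozad-style redundancy constraints \cref{cozad:redun2}--\cref{cozad:redun4} together express the pairwise inequalities $y^{\text{cst}}_{2n+1} + y^o_n \le 1$ for each ``bad'' parent operator $o \in \{-, /, \sqrt{}, \exp, \log\}$, while our aggregated inequality \cref{constr:redun1} reads $y^{\text{cst}}_{2n+1} \le y^+_n + y^*_n$.

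For the binary equality I would prove both inclusions directly. If $y \in \mathcal{F}_B$ satisfies \cref{constr:redun1}, then either $y^{\text{cst}}_{2n+1} = 0$ (in which case every pairwise Cozad inequality is trivial) or $y^{\text{cst}}_{2n+1} = 1$, forcing $y^+_n + y^*_n \ge 1$; combined with $\sum_o y^o_n \le 1$ from \cref{cozad:grammar1}, this drives $y^o_n = 0$ for every $o \notin \{+,*\}$, so each pairwise inequality holds. Conversely, if all pairwise inequalities hold and $y^{\text{cst}}_{2n+1} = 1$, then \cref{constr:grammar1} together with \cref{cozad:grammar1} gives $\sum_{o \in \mathcal{B} \cup \mathcal{U}} y^o_n = \sum_{o \in \mathcal{O}} y^o_{2n+1} = 1$; ruling out each of $-, /, \sqrt{}, \exp, \log$ via the pairwise constraints leaves the unit mass on $\{+,*\}$, establishing \cref{constr:redun1}.

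For the strict continuous inclusion I would construct a fractional witness. Pick any non-terminal $n$ and set $y^-_n = y^/_n = 1/2$, $y^{\text{cst}}_{2n+1} = y^{x_1}_{2n+1} = 1/2$, and $y^{x_1}_{2n} = 1$, with every other entry at nodes $n$, $2n$, $2n+1$ equal to zero; the remainder of the tree can be completed to lie in $\mathcal{F}_C$ in a straightforward way. Then $\sum_{o \in \mathcal{B} \cup \mathcal{U}} y^o_n = 1 = \sum_{o \in \mathcal{O}} y^o_{2n+1}$ and $\sum_{o \in \mathcal{B}} y^o_n = 1 = \sum_{o \in \mathcal{O}} y^o_{2n}$, so the tree-defining equalities and \cref{cozad:grammar1} hold, and $y^{x_1}_{2n} = 1$ secures \cref{cozad:grammar2}. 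Each pairwise Cozad inequality reads either $1/2 + 1/2 \le 1$ or $1/2 + 0 \le 1$ and holds, but our constraint demands $1/2 \le 0$, which fails, so the point lies in the Cozad relaxation but not in ours.

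The main obstacle is bookkeeping rather than mathematics: I must verify that the fractional witness simultaneously satisfies the tree-defining equalities, the at-most-one constraint, and the presence-of-variable constraint while isolating the redundancy behavior at node $n$. The underlying mechanism parallels \cref{lem:multichoice}: aggregating several pairwise big-$M$-style inequalities into a single choice inequality preserves the integer feasible set while strictly tightening the continuous relaxation.
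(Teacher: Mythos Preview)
Your proposal is correct and follows essentially the same approach as the paper: both reduce to the node-local aggregation mechanism captured by \cref{lem:multichoice}. The only difference is presentational---the paper derives the implied constraint $y^{\text{cst}}_{2n+1}\le 1-y_n^{\text{none}}$ from \cref{constr:grammar1} and then invokes \cref{lem:multichoice} once to obtain both the binary equality and the strict continuous inclusion simultaneously, whereas you verify the binary case by direct $0/1$ case analysis and exhibit an explicit fractional witness for strictness (which is exactly the type of point \cref{lem:multichoice} guarantees).
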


\begin{proof}
Recall \cref{cozad:redun2}-\cref{cozad:redun4}:
\begin{align*}
    y^{\text{cst}}_{2n+1} & \le 1 - \sum_{o \in \mathcal{U}} y^o_n, && n \notin \mathcal{T}, \\
    y^{\text{cst}}_{2n+1}  & \le 1 - y^-_n, && n \notin \mathcal{T}, \\
    y^{\text{cst}}_{2n+1} & \le 1 -  y^/_n, && n \notin \mathcal{T}.
\end{align*}

By relaxing \cref{constr:grammar1}, we get the following inequality:
\begin{align*}
    & y^{\text{cst}}_{2n+1} \le 1 - y_n^{\text{none}} , && n \notin \mathcal{T},
\end{align*}
where $y_n^{\text{none}} := 1 - \sum_{o \in \mathcal{B} \cup \mathcal{U}} y^o_n$.
Let us consider that those constraints are defining the upper bound of $y_{2n+1}^{\text{cst}}$ depending on the choice of $y_n$. 
For example, $y^{\text{cst}}_{2n+1} \le 1 - y^-_n$ enforces the upper bound by 0 if $y^-_n = 1$; otherwise it relaxes this constraint. 
By \cref{lem:multichoice}, we can merge the constraints for each $n$.
The merged constraints are
\begin{align*}
    y^{\text{cst}}_{2n+1} & \le 1 - \sum_{o \in \mathcal{U}} y^o_n - y^-_n - y^/_n - y_n^{\text{none}} = y^+_n + y^*_n, && n \notin \mathcal{T}.
\end{align*}
By \cref{lem:multichoice}, this merge does not change the feasible set; it reduces the feasible set of the relaxation.
\end{proof}

\subsubsection{Implication Cuts}
\label{sec:implication}

Implication cuts are motivated by the fact that some operators are domain-restricted, for example, $/$, $\sqrt{}$, and $\log$.
% there are invalid operations in combination of a domain-restricted operator such as $/, \sqrt{}, \log$ and the domain of independent variables.
From a set of given data points, we can identify  the characteristics of the independent variables:
\begin{subequations}
\begin{align}
    \mathcal{X}_{\text{posi}} & = \left\{i \in [d] ~\middle|~ \exists j \in [n_{\text{data}}]: x_{i,j} > 0\right\}, \\
    \mathcal{X}_{\text{nega}} & = \left\{i \in [d] ~\middle|~ \exists j \in [n_{\text{data}}]: x_{i,j} < 0\right\}, \\
    % \mathcal{X}_{\text{both}} & = \mathcal{X}_{\text{posi}} \cap \mathcal{X}_{\text{nega}}, \\
    \mathcal{X}_{\text{zero}} & = \left\{i \in [d] ~\middle|~ \exists j \in [n_{\text{data}}]: x_{i,j} = 0 \right\}.
\end{align}
\end{subequations}
% the characteristics of the independent variables.
% \begin{itemize}
%     \item $x_{\text{posi}}$, can be positive,
%     \item $x_{\text{nega}}$, can be negative,
%     \item $x_{\text{both}}$, can be both positive and negative,
%     \item $x_{\text{zero}}$, can be zero
% \end{itemize}
All invalid expression trees with up to depth one are described in \cref{fig:implication}.
\begin{figure}[htb]
    \centering
    \subfloat[$(\cdot) / x_{i}$, $i \in \mathcal{X}_{\text{zero}}$]{
        \begin{tikzpicture}
        [scale=0.6,
        every node/.style = {shape=circle, align=center, draw, line width=0.2mm, minimum size = 10mm
        },
        edge from parent/.style = {draw, line width=0.2mm},
        level distance=5em,
        ]
        \tikzstyle{level 1}=[sibling distance=12em]
        \tikzstyle{level 2}=[sibling distance=10em]
        \tikzstyle{level 3}=[sibling distance=10em]
        \node [] { $/$ }
            child [] { node [] { } 
                }
            child [] { node [] { $x_i$ } 
                };
        \end{tikzpicture}
    }
    \quad
    \subfloat[$\sqrt{x_j}$, $j \in \mathcal{X}_{\text{nega}}$]{
        \begin{tikzpicture}
        [scale=0.6,
        every node/.style = {shape=circle, align=center, draw, line width=0.2mm, minimum size = 10mm
        },
        edge from parent/.style = {draw, line width=0.2mm},
        level distance=5em,
        ]
        \tikzstyle{level 1}=[sibling distance=12em]
        \tikzstyle{level 2}=[sibling distance=10em]
        \tikzstyle{level 3}=[sibling distance=10em]
        \node [] { $\sqrt{}$ }
            child [] { node [] {  } 
                }
            child [] { node [] { $x_j$ } 
                };
        \end{tikzpicture}
    }
    \quad
    \subfloat[$\log(x_k)$, $k \in \mathcal{X}_{\text{nega}} \cup \mathcal{X}_{\text{zero}}$]{
        \begin{tikzpicture}
        [scale=0.6,
        every node/.style = {shape=circle, align=center, draw, line width=0.2mm, minimum size = 10mm
        },
        edge from parent/.style = {draw, line width=0.2mm},
        level distance=5em,
        ]
        \tikzstyle{level 1}=[sibling distance=12em]
        \tikzstyle{level 2}=[sibling distance=10em]
        \tikzstyle{level 3}=[sibling distance=10em]
        \node [] { $\log$ }
            child [] { node [] { } 
                }
            child [] { node [] { $x_k$ } 
                };
        \end{tikzpicture}
    }
    \caption{Invalid expression trees because of domain restriction.}
    \label{fig:implication}
\end{figure}
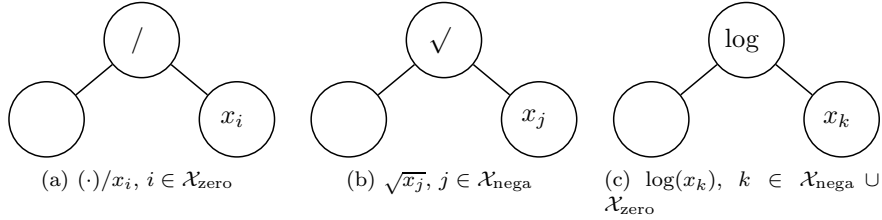
We can write the constraints that restrict the expressions in \cref{fig:implication} as follows:
\begin{subequations}
\begin{align}
    % \mbox{(Implication)} 
    && y_{n}^{/} + y_{2n+1}^{x_j} & \le 1, && \forall j \in \mathcal{X}_{\text{zero}}, ~\forall n \notin \mathcal{T}, \label{constr:impl1} \\
    && y_{n}^{\sqrt{}} + y_{2n+1}^{x_j} & \le 1, && \forall j \in \mathcal{X}_{\text{nega}}, ~\forall n \notin \mathcal{T}, \label{constr:impl2} \\
    && y_{n}^{\log} + y_{2n+1}^{x_j} & \le 1, && \forall j \in \mathcal{X}_{\text{nega}} \cup \mathcal{X}_{\text{zero}}, ~\forall n \notin \mathcal{T}. \label{constr:impl4}
    % && y_{n}^{\log} + y_{2n+1}^{x_{\text{nega}}} & \le 1, && \forall n \notin \mathcal{T}. \label{constr:impl4}
\end{align}
\end{subequations}
Constraints \cref{constr:impl1}--\cref{constr:impl4} exclude an expression tree that includes the expressions in \cref{fig:implication} as a subtree. 
We can generate more invalid trees from depth-two or higher-depth trees.
One example is $\sqrt{x_{j}x_{k}}$ for $j \in \mathcal{X}_{\text{posi}}$ and for $k \in \mathcal{X}_{\text{nega}}$.
The corresponding constraint is 
\begin{multline}
    y_{n}^{\sqrt{}} + y_{2n+1}^{*} + y_{4n+2}^{x_j} + y_{4n+3}^{x_k} \le 3, \\
    \forall j \in \mathcal{X}_{\text{posi}}, ~ \forall k \in \mathcal{X}_{\text{nega}}, 
    ~\forall n \in \mathcal{N} : \{4n+2, 4n+3\} \subseteq \mathcal{N}.
\end{multline}

Note that the expressions in \cref{fig:implication} are already infeasible in both our formulation and the formulation in \cite{cozad2018global} because of \cref{eqn:/domain_rch,eqn:sqrtdomain,eqn:logdomain}.
However, adding implication cuts reduces the feasible space of the relaxation.
Example~\ref{exam:implication} shows that the solution $y_n^{\sqrt{}}=y_{2n+1}^{x_j}=0.9$ for $j \in \mathcal{X}_{\text{nega}}$ is feasible in the space of the relaxation of the formulation without implication cuts, whereas it violates \cref{constr:impl2}.

\begin{example} \label{exam:implication}
We consider a symbolic regression problem with $\mathcal{N} = \{1,3\}$, $\mathcal{P} = \{\sqrt{}\}$ and a single data point $(x_{1,1},z_1) = (-1, 5)$.
Suppose that $v_{\text{lo}} = -10$, $v_{\text{up}} = 10$, and $\epsilon=0.01$. 
% We show the solution $y_1^{\sqrt{}}=y_{3}^{x_1}=0.9$ as feasible in the relaxation without implication cuts.
To streamline the presentation, we assume that $y_1^{\text{cst}} = y_3^{\text{cst}} = 0$.
The formulation without implication cuts is follows:
\begin{align*}
    \min \quad & (5-v_{1,1})^2, \\
    \mbox{s.t.} \quad & \mbox{Tree-Defining Constraints:} \\
    & y_1^{\sqrt{}} + y_1^{x_1} \le 1, \quad y_3^{x_1} \le 1, \quad y_1^{x_1} + y_3^{x_1} \ge 1, \quad y_1^{\sqrt{}} = y_3^{x_1}, \\
    & \mbox{Value-Defining Constraints:} \\
    & v_{1,1} \le (-1)y_1^{x_1} + 10y_1^{\sqrt{}}, \quad v_{1,1} \ge (-1)y_1^{x_1} - 10y_1^{\sqrt{}}, \\
    & v_{1,3} \le (-1)y_3^{x_1}, \quad v_{1,3} \ge (-1)y_3^{x_1}, \\
    & v_{1,1}^2 - v_{1,3} \le 90 (1-y_1^{\sqrt{}}), \quad v_{1,1}^2 - v_{1,3} \ge -10 (1-y_1^{\sqrt{}}), \\
    & 0.01 - v_{1,3} \le 10.01(1-y_1^{\sqrt{}}), \\
    & \mbox{Variable Restrictions:} \\
    & -10 \le v_{1,1}, v_{1,3} \le 10, \quad y_{1}^{\sqrt{}}, y_{1}^{x_1}, y_{3}^{x_1} \in \{0,1\}.
\end{align*}
Note that there is no redundancy-removing constraint because we consider only a limited set of operators and a limited set of nodes.
We consider the relaxation that replaces $y_{1}^{\sqrt{}}, y_{1}^{x_1}, y_{3}^{x_1} \in \{0,1\}$ with $y_{1}^{\sqrt{}}, y_{1}^{x_1}, y_{3}^{x_1} \in [0,1]$.
Consider the solution $(y_{1}^{\sqrt{}}, \allowbreak y_{1}^{x_1}, y_{3}^{x_1}, \allowbreak v_{1,1}, \allowbreak v_{1,3}) \allowbreak = (0.9, 0.1, 0.9, 0, -0.9)$.
% $y_{1}^{\sqrt{}}=y_{3}^{x_1}=0.9$, $y_{1}^{x_1}=0.1$, $v_{1,1} = 0$, and $v_{1,3} = -0.9$.
The solution is feasible in the relaxation, whereas it violates $y_1^{\sqrt{}} + y_3^{x_1} \le 1$, which is \cref{constr:impl2}.
\end{example}

\subsubsection{Symmetry-Breaking Constraints}
\label{sec:symmetry}
Symmetry-breaking constraints remove equivalent expression trees because of a symmetric operator including addition ($+$) and multiplication ($*$), for example, $x_1 + x_2 = x_2 + x_1$.
We retain only those expression trees in which the left argument value is greater than or equal to the right argument value for the first data point:
\begin{align}
    % \mbox{(Symmetry-Breaking)} 
    && v_{1,2n} - v_{1,2n+1} & \ge (v_{\text{lo}} - v_{\text{up}}) ( 1 - y^+_n - y^*_n ), && n \in \mathcal{N}_{\text{perfect}}. \label{eqn:main:sym}
\end{align}
Symmetry-breaking constraints are  discussed in \cite[(5)]{cozad2018global}.
We rewrite the constraints in terms of our notations because we assume that $\mathcal{N}$ corresponds to a full binary tree whereas \cite{cozad2018global} assumes that $\mathcal{N}$ corresponds to a perfect binary tree.

\subsection{Summary} 
We summarize the formulation and our contributions in \cref{tab:formulation}.
\begin{table}[htb]
    \centering
    \caption{Summary of the formulation and contributions compared with the benchmark formulation \cite{cozad2018global}.}
    \label{tab:formulation}
    \adjustbox{max width=\textwidth}{
    \begin{tabular}{c|c|c|c|c} \hline
    Categories & Variables & Convexity & Remove additional & Reduce \\
    &&& equiv. expressions & relaxation space \\ \hline
    Objective & $v$ & convex & & \\ 
    Tree-defining constraints & $y,c$ & linear & $\checkmark$ & $\checkmark$ \\
    Value-defining constraints & $y,c,v$ & nonconvex &  & $\checkmark$\\
    Redundancy-eliminating constraints & $y$ & linear & $\checkmark$ & $\checkmark$\\
    Implication cuts & $y$ & linear & & $\checkmark$\\
    Symmetry-breaking constraints & $y,v$ & linear & & \\ \hline 
    \end{tabular}
    }
\end{table}

The formulation we propose for symbolic regression is
% \begin{subequations}
\begin{align*}
\min_{y,c,v} \quad & \frac{1}{n_{\text{data}}}\sum_{i=1}^{n_{\text{data}}} (z_i - v_{i,1})^2 \\
\mbox{s.t.} \quad  & \mbox{\cref{constr:grammar1}-\cref{constr:grammar2}}, \mbox{\cref{cozad:grammar1}-\cref{cozad:grammar2}}, && \mbox{(Tree-defining constraints)} \\
& \mbox{\cref{eqn:varub}-\cref{eqn:varlb}}, \mbox{\cref{eqn:cstub}-\cref{eqn:logdomain}}, && \mbox{(Value-defining constraints)} \\
& \mbox{\cref{constr:redun2}-\cref{constr:redun1}}, \mbox{\cref{cozad:redun4}-\cref{cozad:redun6}}, && \mbox{(Redundancy-eliminating constraints)} \\
& \mbox{\cref{constr:impl1}-\cref{constr:impl4}}, && \mbox{(Implication cuts)} \\
& \mbox{\cref{eqn:main:sym}}, && \mbox{(Symmetry-breaking constraints)} \\
& y_n^o \in \{0,1\}, && \forall (n,o) \in \mathcal{Y}, \\
& c_{\text{lo}} \le c_n \le c_{\text{up}}, && \forall n \in \mathcal{N}, \\
& v_{\text{lo}} \le v_{i,n} \le v_{\text{up}}, && \forall i \in [n_{\text{data}}], ~\forall n \in \mathcal{N}.
\end{align*}
% \end{subequations}

\section{Sequential Tree Construction Heuristic}
\label{sec:sr-heuristic}

In this section, we propose a heuristic that searches a solution by building up an
expression tree starting from a simple approximation to a comprehensive formula.
It repeatedly modifies a part of the current expression tree in order to lower the training error.
It is motivated by the fact that a comprehensive formula can be approximated by a simple formula, and we observe that those two formulas have similar structures.
For example, $\frac{1 - x + x^2 }{1 + x}$ can be approximated by $\frac{1-x}{1+x}$ when $|x|$ is small, see \cref{fig:movitation}.
We can achieve the comprehensive formula by adding $x^2$ to the simple formula. 
Another example can be found in Kepler's third law, where the comprehensive formula $d = \sqrt[3]{c\tau^2 (M+m)}$ is approximated by a simple formula $d = \sqrt[3]{c\tau^2 M}$ for $M \gg m$.
% \looseness-3

\begin{figure}[htb]
    \centering
    \subfloat[$x \in (-1,2)$]{\includegraphics[width=0.47\textwidth]{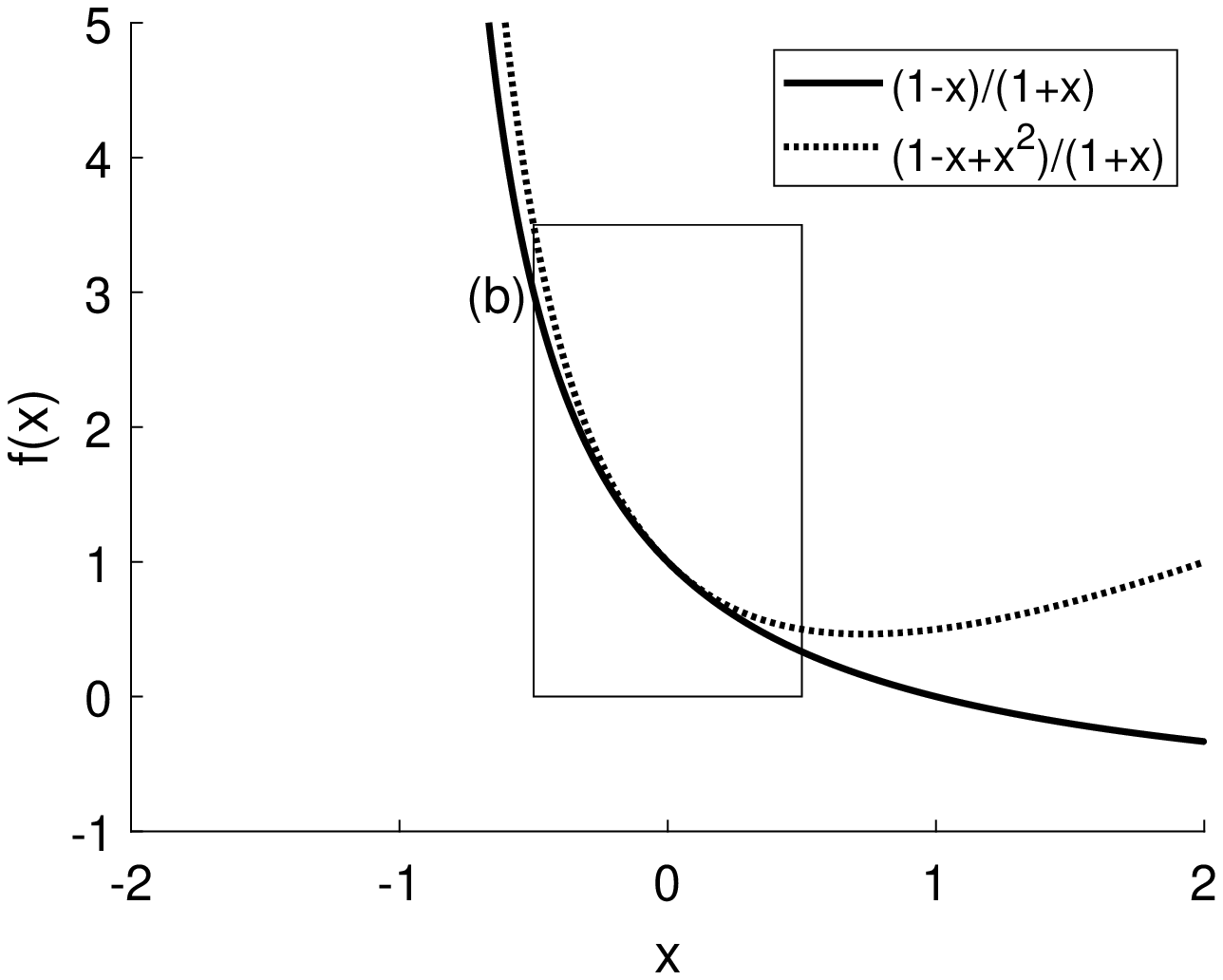}}
    \subfloat[$x \in (-0.5,0.5)$]{\includegraphics[width=0.47\textwidth]{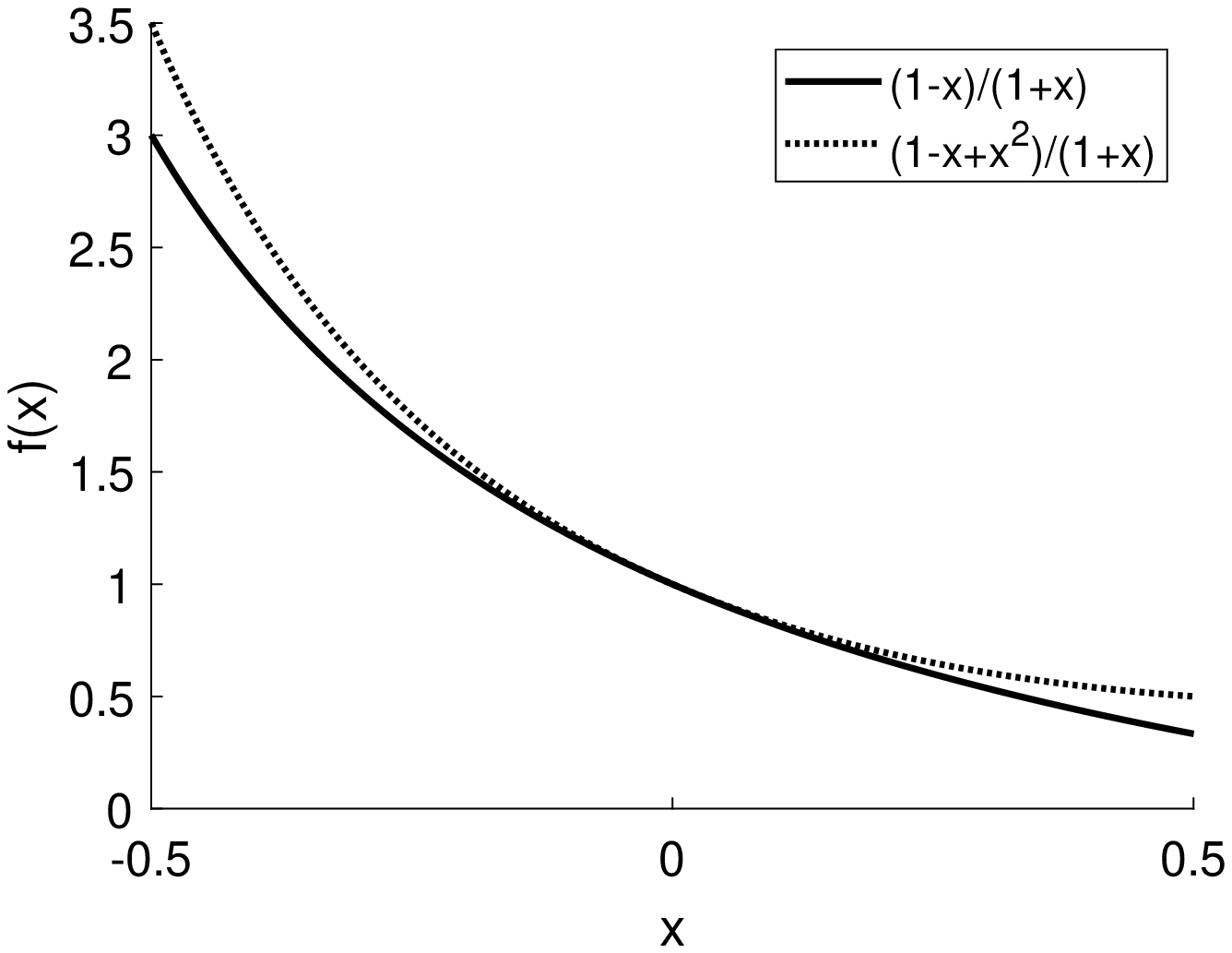}}
    \caption{Illustration of $\frac{1 - x + x^2 }{1 + x}$ and $\frac{1-x}{1+x}$.}
    \label{fig:movitation}
\end{figure}

Our approach is motivated by a heuristic framework for general MINLPs that searches an improved solution from the neighbors of the current solution, namely, local branching proposed by \cite{fischetti2003local}.
A local branching heuristic iteratively explores the neighbors by solving a restricted MINLP with a branch-and-bound solver.
We develop a \emph{sequential tree construction heuristic (STreCH)} based on the formulation in \cref{sec:sr-formulation}.
% and the idea of local branching heuristic.
In \cref{sec:heur-basic} we define the neighbors of an expression tree that is the core of our heuristic, and in \cref{sec:heur-fast} we  discuss how to speed up the heuristic.

\subsection{Definition of Neighbors and a Basic Heuristic}  
\label{sec:heur-basic}

We define the distance between two expression trees as the number of nodes assigned different operators/operands.
There are three cases of a node with different assignment:
\begin{itemize}
    \item a change of a node assignment from an operator/operand to another operator/operand,
    \item an addition of a new node, and
    \item a deletion of a node.
\end{itemize}
We consider constant as an operand.
This distance does not count a change from a constant value to another constant value.
For example, the distance between two trees in \cref{fig:heur-neighbor} is three because of one change (from $x_1$ to $*$) and two additions of a node. 
The change in the constant value (from $2.0$ to $1.5$) is not counted.
% The distance between two expression trees is represented as follows:
% \begin{align}
%     \delta(\bar{y},\hat{y}) = \sum_{n \in \mathcal{N}_{\text{active}}(\bar{y})}
%     (1-y_{n}^{o(n,\bar{y})}) + 
%     \sum_{n \notin \mathcal{N}_{\text{active}}(\bar{y})} \sum_{o \in \mathcal{O}} y_{n}^{o}.
% \end{align}

We define a $k$-neighbor of a given solution $\bar{y}$ as an expression tree with distance at most $k$.
Let $\mathcal{N}_\text{active}(\bar{y})$ be the set of nodes in which an operator/operand is assigned.
Let $o(n,\bar{y})$ for $n \in \mathcal{N}_\text{active}(\bar{y})$ be the operator/operand that is assigned to node $n$. 
The set of $k$-neighbors of $\bar{y}$, $\mathcal{NB}_k(\bar{y})$, is represented as follows: 
% We define the neighbors based on the binary variables, i.e., $y$'s, which decide the structure of the expression tree.
% We define a set of $k$-neighbors of solution $\bar{y}$ as
\begin{align}
    % \mathcal{NB}_k(\bar{y}) = \{y ~|~ ||y-\bar{y}||_1 \le k\} = \left\{y ~\middle|~ \sum_{(n,o) \in \mathcal{Y}: \bar{y}^o_n=0} y^o_n + \sum_{(n,o) \in \mathcal{Y}: \bar{y}^o_n=1} (1 - y^o_n) \le k\right\}. \label{eqn:neighbor}
    \delta(\bar{y},y) & = \sum_{n \in \mathcal{N}_{\text{active}}(\bar{y})}
    (1-y_{n}^{o(n,\bar{y})}) + 
    \sum_{n \notin \mathcal{N}_{\text{active}}(\bar{y})} \sum_{o \in \mathcal{O}} y_{n}^{o}, \label{eqn:dist}\\
    \mathcal{NB}_k(\bar{y}) & = \left\{y ~\middle|~ \delta(\bar{y},y) \le k\right\}. \label{eqn:neighbor}
\end{align}
% A $k$-neighbor of $\bar{y}$ is an expression tree that has a different assignment from the expression tree of $\bar{y}$ in at most $k$ nodes.
% By definition, a different assignment at a node includes:
% \begin{itemize}
%     \item a change of an assignment (an operator or an operand) at a node,
%     \item an addition of a new node, and
%     \item a deletion of a node,
% \end{itemize}
% while the change from a constant value to another constant value is not included.
The restricted MINLP that searches $k$-neighbors needs a single additional linear constraint described in \cref{eqn:neighbor}.

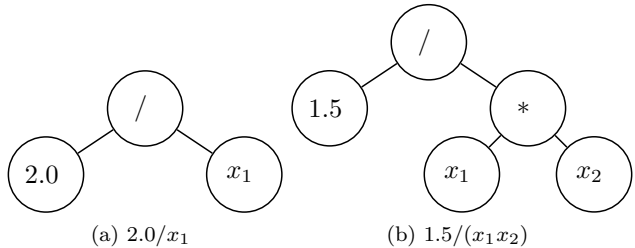
\begin{figure}[htb]
    \centering
    \subfloat[$2.0/x_1$]{
        \begin{tikzpicture}
        [scale=0.5,
        every node/.style = {shape=circle, align=center, draw, line width=0.2mm, minimum size = 10mm
        },
        edge from parent/.style = {draw, line width=0.2mm},
        level distance=5em,
        ]
        \tikzstyle{level 1}=[sibling distance=15em]
        \tikzstyle{level 2}=[sibling distance=10em]
        \tikzstyle{level 3}=[sibling distance=10em]
        \node [] { $/$ }
            child [] { node [] { 2.0 } 
                }
            child [] { node [] { $x_{1}$ } 
                };
        \end{tikzpicture}
    }
    \subfloat[$1.5/(x_1x_2)$]{
        \begin{tikzpicture}
        [scale=0.5,
        every node/.style = {shape=circle, align=center, draw, line width=0.2mm, minimum size = 10mm
        },
        edge from parent/.style = {draw, line width=0.2mm},
        level distance=5em,
        ]
        \tikzstyle{level 1}=[sibling distance=15em]
        \tikzstyle{level 2}=[sibling distance=10em]
        \tikzstyle{level 3}=[sibling distance=10em]
        \node [] { $/$ }
            child [] { node [] { 1.5 } 
                }
            child [] { node [] { $*$ } 
                child [] { node [] { $x_1$ }
                    }
                child [] { node [] { $x_2$ } 
                    } 
                };
        \end{tikzpicture}    
    }
    \caption{Two expression trees with distance three.}
    \label{fig:heur-neighbor}
\end{figure}

Our basic heuristic works as follows.
It first obtains an initial solution by solving a small-sized problem.
At each iteration, it searches for an improvement of the current solution by exploring its neighbors by solving a restricted MINLP with \cref{eqn:neighbor}.
It terminates by optimality (the objective value is less than a given tolerance $\epsilon$) or by time limit.
In our implementation we encounter the following situations:
\begin{itemize}
    \item When the tree size (the number of nodes) of the incumbent solution is large, it takes a long time to search its neighbors. 
    \item There is no better solution in its neighbor at some iteration.
\end{itemize}
We next propose some ideas to mitigate these situations.

\begin{algorithm}[tb]
\caption{\texttt{ResMINLP} (approximately solving a restricted MINLP).}
\label{algo:MINLP}
\begin{algorithm2e}[H]
% \SetKwInOut{Input}{input}\SetKwInOut{Output}{output}
\KwData{ $(x,z)$, $\mathcal{P}$, $\mathcal{N}$; (optional) $\bar{y}$, $k_1$, $k_2$, $\beta$, $\gamma$  }
\KwResult{An expression tree $(y,c)$ found by MINLP and its training error $\omega$}
Formulate a MINLP with with data $(x,z)$, operators $\mathcal{P}$, nodes $\mathcal{N}$\;
\If{$\bar{y}$, $k_1$, and $k_2$ are given}
{
    Add constraint $k_1 \le \delta(\bar{y},y) \le k_2$ to search within $\mathcal{NB}_{k_2}(\bar{y}) \setminus \mathcal{NB}_{k_1}(\bar{y})$\;
}
\If{$\bar{y}$ and $\beta$ are given}
{
    Fix some variables in $y$ by given solution $\bar{y}$ and fix level $\beta$\;
}
\If{$\gamma$ is given}
{
    Set the node limit of a branch-and-bound solver to $\gamma$\;
}
Solve the problem with a branch-and-bound solver\;
\Return ($y$, $c$, $\omega$)\;
\end{algorithm2e}
\end{algorithm}

\subsection{Heuristics to Speed Up Ideas and STreCH}
\label{sec:heur-fast}

In this section we propose  STreCH, whose pseudocode is described in \cref{algo:STreCH}.
\Cref{algo:MINLP} describes a restricted MINLP solve, and 
\Cref{algo:localsearch} describes the solution improvement procedure.
In addition, we propose a number of heuristics to speed up the solution process:

\paragraph{Early Termination} 
A branch-and-bound solver returns an optimal solution with the proof of optimality.
However, proving optimality for a restricted problem does not guarantee optimality of the whole problem.
%GAIL - the next sentence seems awkward -- 'sufficient effort to avoid spending significant effort.' Also, are First and Second really sequential, or alternatives -- whichever comes first. Why cannot you say Therefore, we terminate the solver under one of the following conditions: (1) when it finds a solution... , or (2) when it reachers.
Therefore, we terminate the solver under one of the following conditions: (1) when it finds an improved solution, or (2) it reaches a time-limit to prove optimality.
First, we stop an iteration when the solver finds a solution whose training error is smaller than $(100 * \delta) \%$ of the training error of the incumbent. 
Second, we stop an iteration when it reaches at a predetermined time limit or node limit.
% node limit is useful when we need a reproducible procedure.
% When there is no better solution in the neighbor

\paragraph{Start Value}
We provide the incumbent as a starting point. 
In general, it is not always efficient especially when we are looking for an optimal solution with the proof of optimality.
However, it helps in combination with early termination.

\paragraph{Fix a Part of an Expression Tree}
As the size of an expression tree grows, the size of $k$-neighbors increases. We fix the top part of an expression tree to reduce the search space.
Specifically, given an incumbent and $\beta \in \mathbb{Z}_{++}$, we fix a node that has a descendant at the $\beta$-th lower generation where the first lower generation is the children and the $i$th generation is the children of $(i-1)$th generation.
For example, when $\beta = 1$, we fix all nonleaf nodes. 
When $\beta = 2$, we fix all grandparents of some node.

\begin{algorithm}[tb]
\caption{\texttt{Improve} (procedure to find an improved solution from a solution).}
\label{algo:localsearch}
\begin{algorithm2e}[H]
\KwData{ $(x,z)$, $\mathcal{P}$, $\mathcal{N}$, solution $(\bar{y}, \bar{c}, \bar{\omega})$ }
\KwResult{ An improved solution if found, otherwise the given solution}
% 			Parameters $k_{\text{init}}, \beta_{\text{init}}, \gamma_{\text{init}}, k_{\max}, \beta_{\max}, \gamma_{\max}$ }
%\KwResult{	collection of symbolic expressions}

%$(k_{\text{init}}, \beta_{\text{init}}, \gamma_{\text{init}}) \leftarrow (3, 1, 10^4)$\;
%$(k_{\max}, \beta_{\max}, \gamma_{\max}) \leftarrow (7, 2, 10^5)$\;
$(k_1, k_2, \beta, \gamma) \leftarrow (0, k_{\text{init}}, \beta_{\text{init}}, \gamma_{\text{init}})$ \tcp*{Initialize the parameters.}

\Repeat(\tcp*[f]{Repeatedly search neighbors.}){Time limit reached}{
	$(y,c,\omega) \leftarrow \texttt{ResMINLP}(x,z,\mathcal{P},\mathcal{N},k_1,k_2,\beta,\gamma)$\;
	\uIf{$\omega < \bar{\omega}$}
	{
	    
		\Return $(y,c,\omega)$ \tcp*{Return an improved solution.}
	}	
	\uElseIf{Terminated by node limit and $10\gamma \le \gamma_{\max}$}{
		$\gamma \leftarrow 10\gamma$ \tcp*{Spend more time on this problem.}
	}
	\Else(\tcp*[f]{Spent enough time.})
	{
		$(k_1,k_2) \leftarrow (k_2,k_2+2)$ \tcp*{Change the neighbor set.}
		\If{$k_2 > k_{\max}$}
		{
			$(k_1, k_2) \leftarrow (0, k_{\text{init}})$\;
			$\beta \leftarrow \beta+1$\;
			\If{$\beta > \beta_{\max}$}
			{
				\Return $(\bar{y}, \bar{c}, \bar{\omega})$ \tcp*{Return the given solution.}
			}			
		}
	}
}
\end{algorithm2e}
\end{algorithm}

With the implementation of early termination, there are three situations when a MINLP solver terminates at an iteration.
Let $\bar{y}$ denote the current incumbent and $k$ denote the current distance.
We define the next iteration for each situation as follows:
\begin{enumerate}
    \item The solver returns a better solution. Then, we solve a MINLP restricted by the neighbors of the returned solution.
    \item The solver proves that there is no better solution within the neighbors.
    Then, we search in a larger neighborhood, $\mathcal{NB}_{k'}(\bar{y})\setminus\mathcal{NB}_k(\bar{y})$ where $k' > k$. 
    \item The solver terminates by time limit or node limit and no better solution has been found. 
    Then, we increase the time limit or the node limit in the next iteration.
\end{enumerate}

% We define a sequence of parameter tuples, $\mathcal{P}$: start from $p=1$. 
% If it could not improve the current solution, then $p \leftarrow p+1$.
% See \cref{tab:param_sequence} for an example of $\mathcal{P}$.
% \begin{table}[htb]
%     \centering
%     \begin{tabular}{c|ccc} \hline
%     $p$    &  fixlevel & distance (min, max) & nodelimit \\ \hline
%     1     & 1 & (0,3) & $10^4$ \\ 
%     2     & 2 & (0,3) & $10^4$ \\ 
%     3     & 1 & (4,5) & $10^4$ \\ 
%     4     & 2 & (4,5) & $10^4$ \\ 
%     5     & 1 & (6,7) & $10^4$ \\ 
%     6     & 2 & (6,7) & $10^4$ \\ 
%     7     & 1 & (0,3) & $10^5$ \\
%     $\vdots$ & $\vdots$ & $\vdots$ & $\vdots$ \\ \hline
%     \end{tabular}
%     \caption{Heuristic parameter sequence}
%     \label{tab:param_sequence}
% \end{table}

The pseudocode of  STreCH is described in \cref{algo:MINLP,algo:localsearch,algo:STreCH}.
\Cref{algo:MINLP} describes a restricted MINLP.
\Cref{algo:localsearch} describes the solution-improving procedure.
The inputs of \cref{algo:localsearch} are the solution $\bar{y}$ and the parameters that are given in the beginning and not changed during the procedure: $k_{\text{init}}$ is the initial distance to define neighbors, $k_{\max}$ is the maximum distance of candidate neighbors, $\beta_{\text{init}}$ is the initial node fix level, $\beta_{\max}$ is the maximum node fix level, $\gamma_{\text{init}}$ is the initial node limit for the branch-and-bound tree, and $\gamma_{\max}$ is the maximum node limit for the branch-and-bound tree.
\Cref{algo:STreCH} describes the overall loop.
% When we solve a MINLP, it returns $(\omega, y, c, v)$ where $\omega$ is the objective value and $y,c,v$ is the solution.

We recommend  solving a single MINLP by limiting the number of nodes in the branch-and-bound tree instead of limiting time in the heuristic.
The reason is that limiting the number of node guarantees that the same solution will be reproduced at each iteration whereas limiting time may return a different solution depending on how much resource is available on the computing machine.

\begin{algorithm}[t]
\caption{STreCH.}
\label{algo:STreCH}
\begin{algorithm2e}[H]
\KwData{ $(x,z)$, $\mathcal{P}$, $\mathcal{N}$, $\mathcal{N}_{\text{init}}$ (the node set for the initial problem)  }
% \tcp{$\mathcal{N}_{\text{init}}$: }
\KwResult{An expression tree $(y,v)$ and its training error $\omega$}
$(y,c,\omega) \leftarrow \texttt{ResMINLP}(x,z,\mathcal{P},\mathcal{N}_{\text{init}})$ \tcp*{Solve an initial problem.}

\Repeat(\tcp*[f]{Repeatedly improve a solution.}){Time limit reached}{
    $(y',c',\omega') \leftarrow \texttt{Improve}(x,z,\mathcal{P},\mathcal{N},y,c,\omega)$\;
    \uIf{$\omega' < \omega$}
    {
        
        $(y, c, \omega) \leftarrow (y',c',\omega')$ \tcp*{Update the incumbent.}
    }
    \Else
    {
        break\;   
    }
}
\Return $(y,c,\omega)$\;
\end{algorithm2e}
\end{algorithm}

\section{Computational Experiments} 
\label{sec:sr-exp}

We perform computational experiments on the improved formulation and the sequential tree construction heuristic. 
The first experiment tests our ability to find a global solution.
We investigate whether the new constraints deliver an improvement in computation.
The second experiment tests the ability to find a good approximated symbolic function with limited information.
We assume that a limited number of observations is given and no additional information such as the unit of variables is available.
We compare our methods with AI Feynman \cite{udrescu2020ai2,udrescu2020ai}, which is a state-of-the-art symbolic regression solver specialized for physics formulas.

\paragraph{Test Problems}
We test 71 formulas from the Feynman database for symbolic regression \cite{udrescu2020ai} whose operator set is a subset of $\{+,-,*,/,\sqrt{}\}$.
% Those are drawn from the \textit{Feynman Lectures on Physics}.
\Cref{tab:depth} shows that all formulas can be represented by an expression tree of depth five. 
\begin{table}[htb]
    \centering
    \begin{tabular}{c|c|c|c|c|c|c} \hline 
    Depth & 1 & 2 & 3 & 4 & 5 & Total  \\ \hline 
    \# of formulas & 4 & 25 & 22 & 7 & 13 & 71 \\ \hline
    \end{tabular}
    \caption{Distribution of the required depth to represent a formula in the test problems.}
    \label{tab:depth}
\end{table}
We assume that (i) no unit information is available, (ii) we have a small number of observations (10 data points), and (iii) the observations are noisy.
Although (i)--(iii)  were discussed in \cite{udrescu2020ai} independently, the combination of all three was not discussed.

\paragraph{Hardware and Software}

Our computational experiments are performed on a computer with Intel Xeon Gold 6130 CPU cores running at 2.10 GHz and 192 GB of memory.
The operating system is Linux Ubuntu 18.04.
The code is written in Julia 1.5.3 with SCIP 7.0.0 \cite{GleixnerEtal2018ZR} as a MINLP solver that showed the best performance among open-source global MINLP solvers for this problem \cite{cozad2018global}.
The code is available at \url{https://github.com/jongeunkim/STreCH}.

\subsection{Comparison of MINLP Formulations}

We compare the formulations described in \cref{sec:sr-formulation} with the formulation from \cite{cozad2018global}.

\subsubsection{Experimental Setup for Comparing MINLP Formulations}

We start by investigating the effect of the new optional constraints that do not need to be included in the formulation but can reduce the search space.
In the experiments in \cite{cozad2018global}, the variance of the computational performance is large with regard to the inclusion/exclusion of optional constraints, and the authors
 suggest running all possible formulations in parallel.
In this experiment, we consider four formulations for each method (ours and \cite{cozad2018global}) by adding or not adding redundancy-eliminating constraints and symmetry-breaking constraints.
\emph{Imp} and \emph{Coz} stand for the improved formulation and the formulation from \cite{cozad2018global}, respectively.
\emph{-F} refers to the full formulation (adding all the constraints).
\emph{-N} refers to the formulation  with only the necessary constraints (tree and value defining constraints).
\emph{-R} refers to the formulation with the necessary constraints and the redundancy-eliminating constraints.
\emph{-S} refers to the formulation with the necessary constraints and the symmetry-breaking constraints.
The configuration of the formulations are described in \cref{tab:formulations}.
\begin{table}[htb]
    \centering
    \adjustbox{max width=\textwidth}{
    \begin{tabular}{|c||P{20mm}|P{20mm}|P{20mm}|P{20mm}|P{20mm}|P{20mm}|P{20mm}|P{20mm}|} \hline
    Formulations & Imp-F & Imp-R & Imp-S & Imp-N & Coz-F & Coz-R & Coz-S & Coz-N \\
    \hline \hline
    Objective & \mc{8}{c|}{\cref{obj:main}} \\ \hline
    Tree & \mc{4}{c|}{\cref{constr:grammar1}-\cref{constr:grammar2}, \cref{cozad:grammar1}-\cref{cozad:grammar2}} & \mc{4}{c|}{\cref{cozad:grammar1}-\cref{cozad:grammar6}} \\ \hline
    Value & \mc{4}{c|}{\cref{eqn:varub}-\cref{eqn:varlb}, \cref{eqn:cstub}-\cref{eqn:logdomain}} & \mc{4}{c|}{\cref{eqn:noneub}-\cref{eqn:logdomain}} \\ \hline
    Redundancy & \mc{2}{c|}{\cref{constr:redun2}-\cref{constr:redun1}, \cref{cozad:redun4}-\cref{cozad:redun6}} & \mc{2}{c|}{-} & \mc{2}{c|}{\cref{cozad:redun1}-\cref{cozad:redun6}} & \mc{2}{c|}{-} \\ \hline
    Symmetry & \cref{eqn:main:sym} & - & \cref{eqn:main:sym} & - & \cref{eqn:main:sym} & - & \cref{eqn:main:sym} & - \\  \hline
    \end{tabular}
    }
    \caption{Formulations used in the experiments.}
    \label{tab:formulations}
\end{table}
We do not consider implication cuts because all the independent variables used in the test functions are positive, which means that there are no implication cuts.
% Each method runs MINLP models of four formulations in parallel and terminates when one of four finds a global solution.
We limit the depth of the expression tree to two (seven nodes) in order to find an optimal solution for all instances within the prespecified time limit (three hours).

\subsubsection{Results Comparing MINLP Formulations}

First, we compare our best results with the best results of \cite{cozad2018global} in \cref{fig:formulation_time_best}.
We collect the smallest solution times among four formulations for each method.
We visualize our results using performance profiles \cite{dolan2002benchmarking} in \cref{fig:formulation}.
\Cref{fig:formulation_time_best} shows that our formulations can solve 70\% of instances within ten minutes while \cite{cozad2018global}'s can solve 50\% of instances.
Our formulations failed to solve 2.8\% of instances (2 of 71) within three hours while \cite{cozad2018global}'s  failed to solve 5.6\% of instances (4 of 71) within the time
limit.

\begin{figure}[htbp]
	\subfloat[Best of four.]{
		\label{fig:formulation_time_best}
		\includegraphics[width=0.47\textwidth]{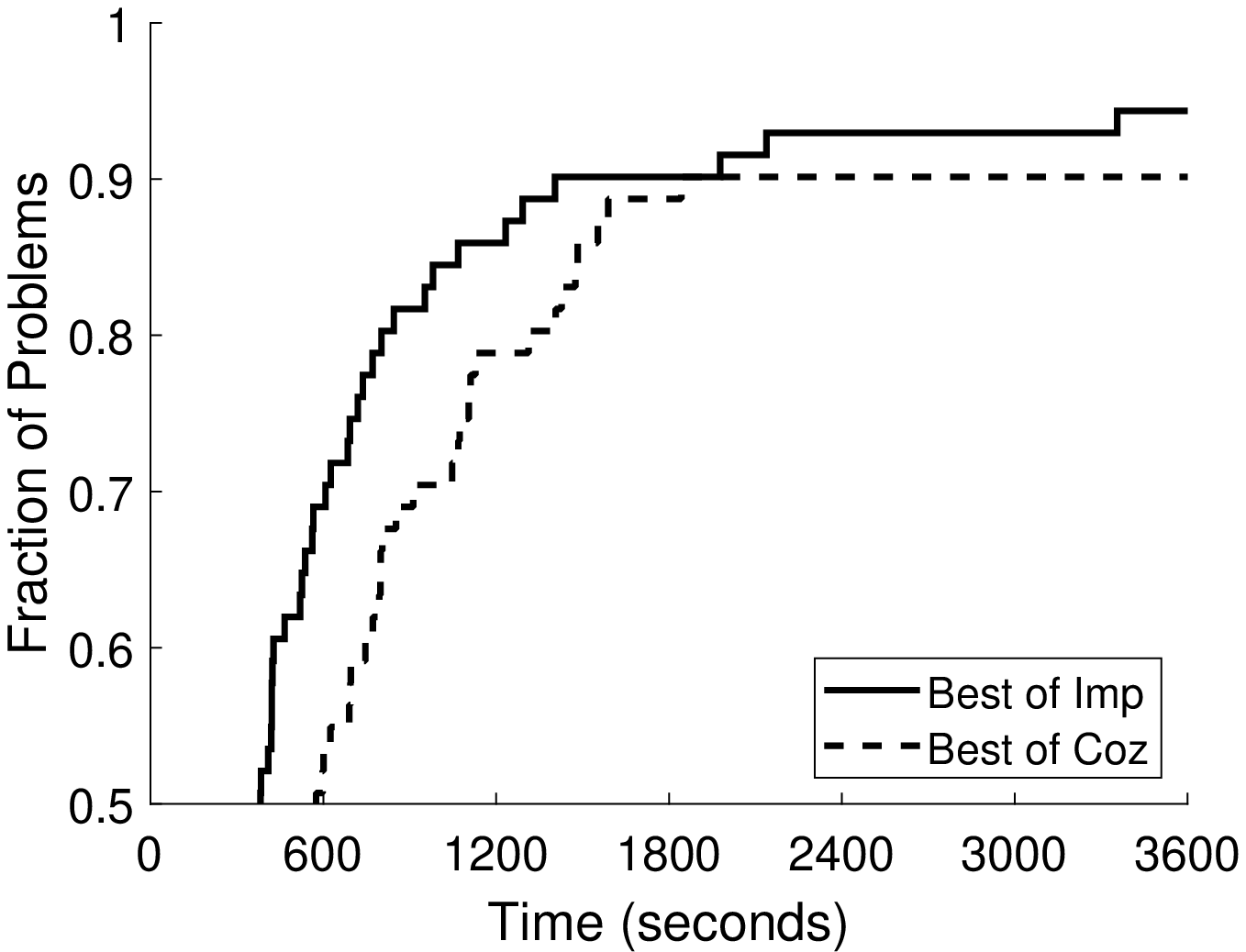}
		}
	\subfloat[All.]{
		\label{fig:formulation_time_all}
		\includegraphics[width=0.47\textwidth]{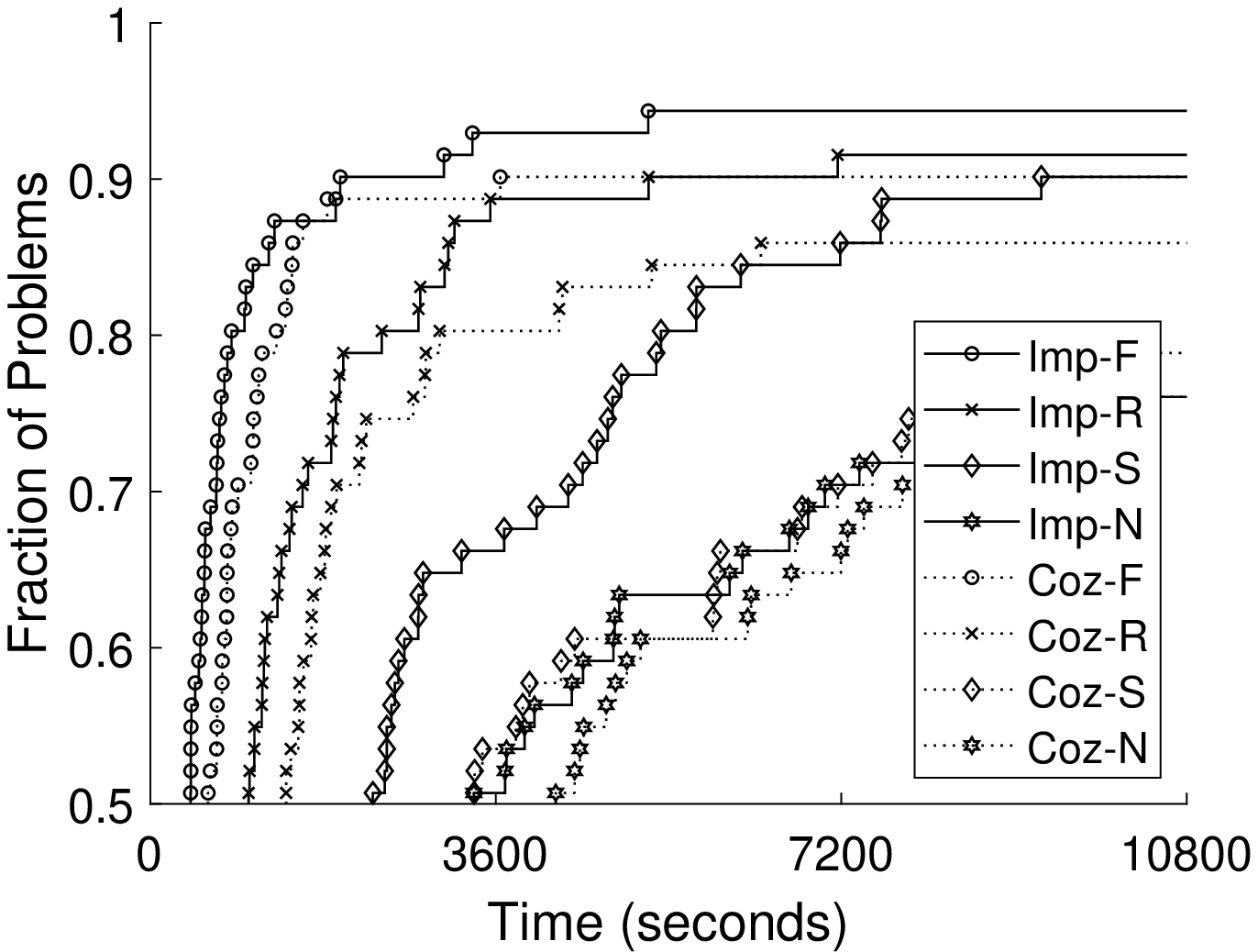}
		}
	\caption{Comparison of solution times of MINLP solves with the improved formulations and those of \cite{cozad2018global}.}
	\label{fig:formulation}
\end{figure}

Next, we compare all eight formulations in \cref{fig:formulation_time_all}.
Now we draw eight lines for each formulation.
\Cref{fig:formulation_time_all} shows that our formulation with all optional constraints performs best.
The formulation terminates first in more than  half of the instances (36 of 71) and is in the top three in 81.7\% of the instances (58 of 71).
\Cref{fig:formulation-bnbnodes} shows that the newly proposed optional constraints in the improved formulation also reduce the number of nodes in the branch-and-bound tree (BnBnodes) compared with  the existing ones.

Our experiments show that it clearly  is better to add all optional constraints to reduce the search space. 
This conclusion differs from the result in \cite{cozad2018global}. We believe that this 
difference may be due to a difference in the branch-and-bound solvers: 
we use SCIP while \cite{cozad2018global} uses BARON \cite{sahinidis:baron:17.8.9,ts:05}. 
% Since the branch-and-bound solvers have different strategies to solve a MINLP problem, one observation from a solver may not hold in another solver.

\subsection{STreCH and AI Feynman}
\label{sec:exp:heur}

The goal of this experiment is to compare the performance of our methods with the state-of-the-art symbolic regression method AI Feynman. 

\subsubsection{Experimental Setup for Heuristic Comparison}

We test both methods with noisy data and perform cross-validation to select the final symbolic expression. 
We first generate a training set with noise and a validation and testing set without noise.
For each method, we find a set of candidate formulas using the training set.
Next, we select the formula from the candidates that has the lowest validation error.
We then compute the testing error of the selected formula.

We consider three approaches: a STreCH-based approach, a MINLP-based approach, and AI Feynman.
The first two approaches solve multiple instances with different parameter setups in parallel to collect candidate formulas.
The set of parameters includes the type of formulation (adding or not adding optional constraints), the maximum depth of the expression tree, the type of constant (integer or fractional), and the bounds on the constants.
When a single instance is solved, the STreCH-based approach uses the heuristic described in \cref{sec:sr-heuristic}, and the MINLP-based approach solves the MINLP problem in \cref{sec:sr-formulation}.

% The STreCH solves multiple symbolic regression problems 
% % The MINLP method solves multiple MINLP formulations 
% with different heuristic parameters and collects candidate formulas for each.
% The set of parameters includes the type of formulation (adding or not adding optional constraints), the maximum depth of the expression tree, the type of constant (integer or fractional), and the bounds on the constants.
% The MINLP-based method solves multiple heuristics as described in \cref{sec:sr-heuristic}.
% We generate a collection of STreCHs by varying the same parameters of the MINLP method.

We run AI Feynman ourselves because there are no reported computational experiments in our setting (running without no unit information, for a small number of observations, and noisy data).
We download the AI Feynman code from \url{https://github.com/SJ001/AI-Feynman}. 
We use the default parameters except the set of operators used in brute-force because the default set does not include all operators used in tested functions.
Instead, we use the largest operator set that includes all used operators.
Note that the AI Feynman code itself manages computing resources in parallel.
% We respect the implementation and run a single command for each test function.

\begin{figure}[tb]
    \centering
	\includegraphics[width=0.7\textwidth]{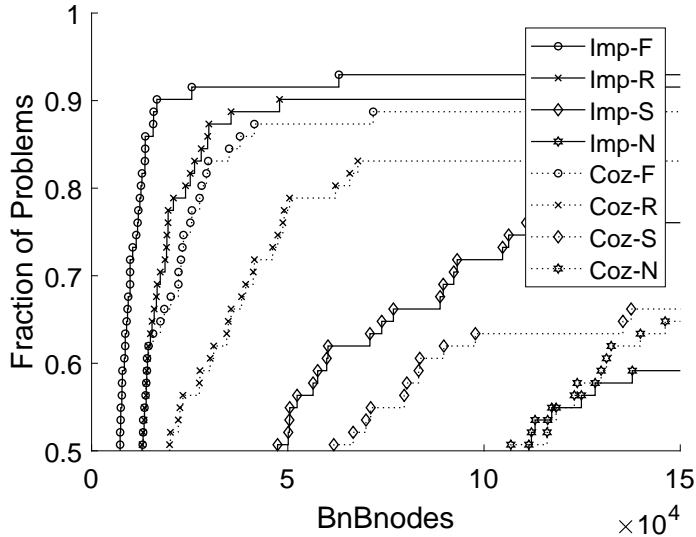} 
	\caption{Comparison of the number of nodes in the branch-and-bound tree of MINLP solves with the improved formulations and the the formulations of \cite{cozad2018global}.}
	\label{fig:formulation-bnbnodes}
\end{figure}

\subsubsection{Results for Comparison of Heuristics}

First, we investigate how many formulas can be rediscovered by each method.
We run all methods on the dataset with ten training data points, a noise level of $10^{-4}$, and no unit information.
In \cref{tab:rediscover}, we observe that every method can discover the correct formula when it can be represented by an expression tree of depth one or two.
When the depth is three, the STreCH and the MINLP approaches discover twice as many formulas as AI Feynman.
When the depth is four or five, the STreCH and the MINLP approaches cannot discover the original formulas, while AI Feynman discovers two formulas.

\begin{table}[htb]
    \centering
    \caption{Required depth to represent a formula.}
    \label{tab:rediscover}
    \begin{tabular}{r|r|r|r|r} \hline 
    Depth & \# Formulas & \mc{3}{c}{Discovery rate (\%)} \\ \cline{3-5}
    & & STreCH & MINLP & AI Feynman  \\ \hline 
    $\le 2$ & 29 & 100.0 & 100.0 & 100.0 \\ 
   	3 & 22 & 59.1 & 54.5 & 27.2 \\
   	$\ge 4$ & 20 & 0.0 & 0.0 & 10.0 \\ 
   	\hline
    \end{tabular}
\end{table}

Next, we investigate formulas for which the methods return different solutions.
We first consider formulas that were discovered by the STreCH and the MINLP approaches but not by AI Feynman.
These include $\frac{q_1q_2r}{4\pi\epsilon r^3}$ (Feynman Eq. I.12.2) and $\frac{2I}{4\pi\epsilon c^2 r}$ (Feynman Eq. II.13.17).
We suspect that this difference happens because the STreCH and the MINLP approaches can assign any constant value at a node in the expression tree whereas AI Feynman relies on the user-specified particular constants. 
Specifically, when solving the problem of $\frac{q_1q_2r}{4\pi\epsilon r^3}$, the STreCH and the MINLP approaches can assign $0.159 (=\frac{1}{2\pi})$ at a node while AI Feynman needs a few steps in combination with a prespecified constant ($\pi$) and operators ($x \rightarrow 2x$ and $x \rightarrow \frac{1}{x}$).
These few steps might hinder the ability of the path to rediscover the original formula. 
This weakness also has been pointed out in \cite{austel2020symbolic}.

Second, we investigate formulas that were discovered by AI Feynman but not by the STreCH and the MINLP approaches.
These include $\frac{m_0}{\sqrt{1-v^2/c^2}}$ and $\frac{\rho_{c_0}}{\sqrt{1-v^2/c^2}}$.
AI Feynman benefits from the use of trigonometrical functions, $\arcsin(\cos(x))$, which are equivalent to $\sqrt{1-x^2}$.
Third, we consider formulas where the STreCH approach could find the original formula but the MINLP approach failed.
These include $\frac{(h/(2\pi))^2}{2E_nd^2}$ (Feynman Eq. III.15.14).
\Cref{tab:heuristic} shows how the STreCH develops a solution at each iteration.
Because the formula is a monomial, there are multiple sequences to reach the correct formula. For example, the correct formula can be obtained from $\frac{h^2}{E_nd^2}$, $\frac{Ch^2}{d^2}$, and $\frac{Ch}{E_nd^2}$, where $C = (8\pi^2)^{-1}$.
Therefore, there are formula structures such as a monomial that the STreCH performs well. 
% which of tree construction to     
% The formula requires an expression tree 
\begin{table}[htb]
    \centering
    \caption{Progress of STreCH to discover $\frac{(h/(2\pi))^2}{2E_nd^2}$.}
    \label{tab:heuristic}
    \begin{tabular}{c|c|c|r} \hline 
    Iteration & Incumbent$^*$ & Update & Time Spent (s)\\ \hline
	1 & $\frac{c_1h}{d}$ & initial solution$^{**}$ & 75.33 \\
	2 & $\frac{c_2h}{E_nd}$ &  $h \rightarrow h/E_n$ & 2.82 \\
	3 & $\frac{c_3h}{E_nd^2}$ & $d \rightarrow d^2$ & 68.39\\
	4 & $\frac{c_4h}{E_nd^2}$ & change the constant value & 1.73 \\
	5 & $\frac{c_5h^2}{E_nd^2}$ & $h \rightarrow h^2$ & 26.24\\
	6 & $\frac{c_6h^2}{E_nd^2}$ & change the constant value & 161.77 \\
    \hline
    \mc{4}{l}{$^*$ $c_1$-$c_6$ are constant values} \\
    \mc{4}{l}{$^{**}$ achieved by solving a depth-two problem} 
    \end{tabular}
\end{table}

\begin{figure}[tb]
	\subfloat[Noiseless.]{
		\label{fig:tsterr_noiseless}
		\includegraphics[width=0.47\textwidth]{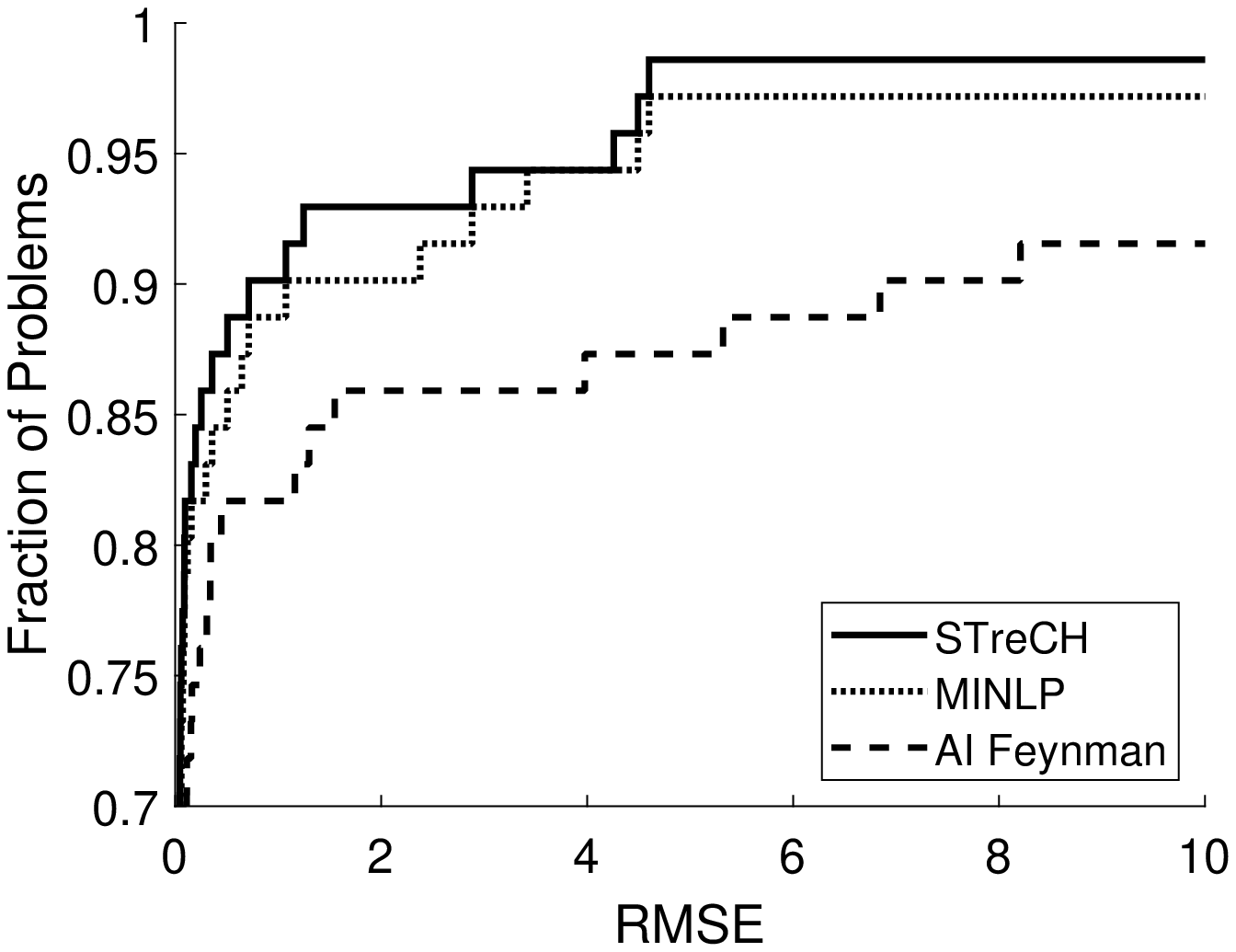}}
	\subfloat[Noise level of 0.01\%.]{
		\label{fig:tsterr_noise}
		\includegraphics[width=0.47\textwidth]{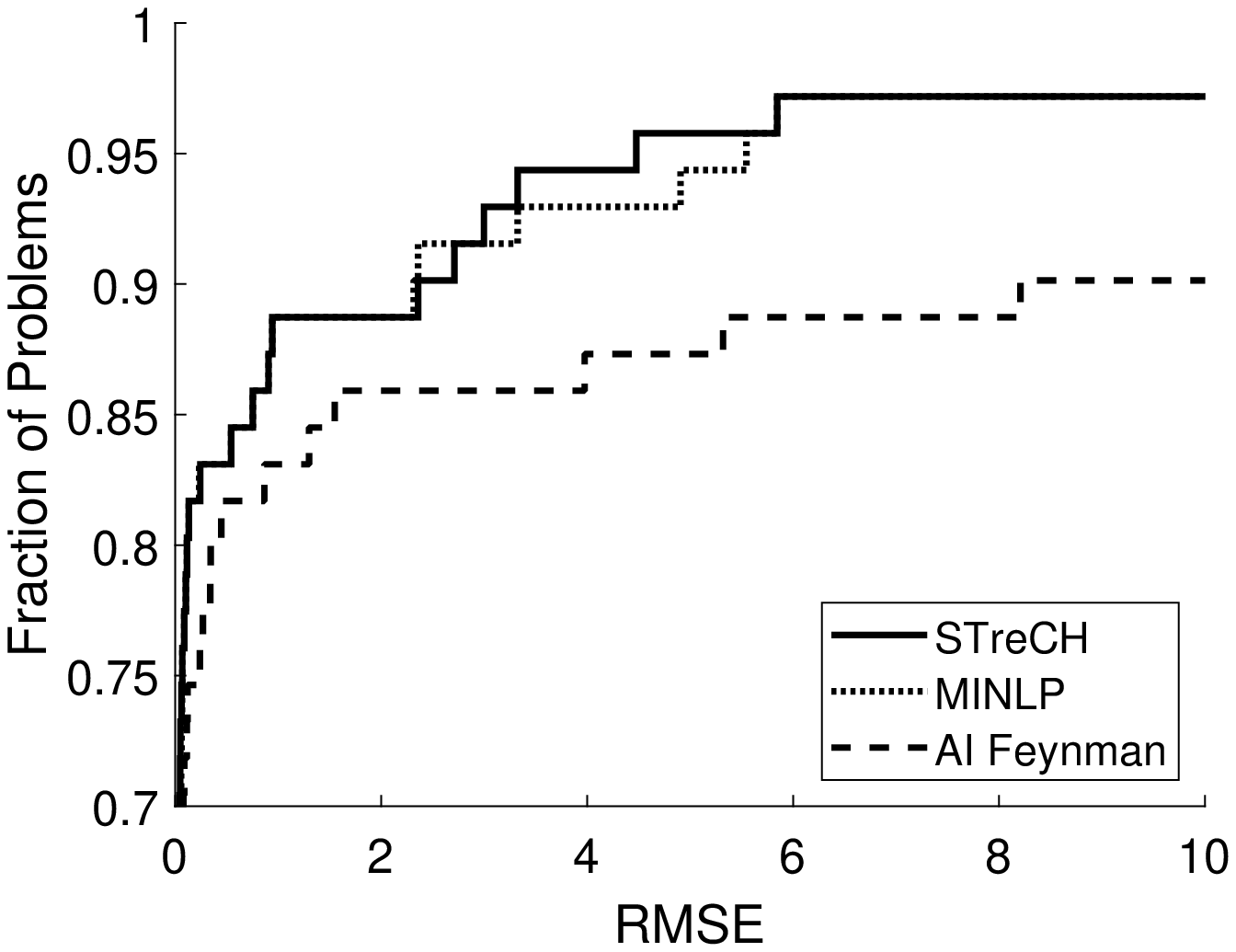}
		}
	\caption{Comparison of the testing errors achieved by the STreCH approach, the MINLP approach, and AI Feynman.}
	\label{fig:tsterr}
\end{figure}

We next compare the testing errors of the three methods.
We perform the computational experiments on both noiseless and noisy data.
\Cref{fig:tsterr} shows the distributions of the root mean square testing errors of the results.
We see that the solutions generated by the STreCH and the MINLP approaches have a lower testing error compared with AI Feynman's solutions.

%The existence of noise does not change 
%The results are 
%All three methods do not show a huge difference by the existence of noise.  
%The formula suggested by MINLP and heuristic methods 
%It shows that expression tree based 

%\subsection{Transformation of the dependent variable}
%
%The transformation of the dependent variable could solve additional problems.
%This technique is applied to AI Feynman.
%$\exp$,$\log$,$()^{-1}$,$()^2$,$\sqrt{}$

% id = 27 72 4 / 6 67
%\subsection{Test on other functions}
%We need to find some functions that the heuristic can rediscover the formula well.

\section{Conclusion}
\label{sec:sr-conclusion}

In this paper we present MINLP-based methods for symbolic regression.
We propose an improved MINLP formulation.
We also propose a new heuristic, STreCH, which is based on the tighter formulation and builds an expression tree by repeatedly modifying a solution expression tree.
Compared with state-of-the-art methods, our methods are able to discover more correct formulas when there is a lack of data.
%GAIL - do you mean rediscover or discover in the preceding? 
When an original formula is difficult to rediscover, our methods return a formula that has a lower testing error.

For future work, our method can be integrated with AI Feynman.
AI Feynman decomposes to small problems and  solves those using polynomial fit and brute-force methods. 
Since  our method is good at finding a relatively simple symbolic expression, it would be a good option for solving small subproblems that arise within the AI Feynman decomposition within a tight time limit, of, say, less that a minute.
%GAIL - not sure why you say in the last minute.  Do you mean it would be a good option when you had a short time limit?

\subsubsection*{Acknowledgements}
This work was supported by the Applied Mathematics activity within the U.S. Department of Energy, Office of Science, Advanced Scientific Computing Research, under Contract DE-AC02-06CH11357.

\appendix

\section{The MINLP Formulation Proposed in \cite{cozad2018global}}
\label{sec:cozad-formulation}

The formulation proposed by \cite{cozad2018global} is written in terms of our notations.

\subsection{Tree-Defining Constraints}
\label{sec:cozad-treeconstr}

The constraints are (1d)--(1i) in \cite{cozad2018global}.

\begin{subequations}
\begin{align}
    \sum_{o \in \mathcal{O}} y^o_n & \le 1, && n \in \mathcal{N}, \label{cozad:grammar1} \\
    \sum_{n \in \mathcal{N}} \sum_{j=1}^d y^{x_{j}}_n & \ge 1, \label{cozad:grammar2} \\
    \sum_{o \in \mathcal{B} \cup \mathcal{U}} y^o_n & \le \sum_{o \in \mathcal{O}} y^o_{2n+1}, && n \notin \mathcal{T}, \label{cozad:grammar3} \\
    \sum_{o \in \mathcal{B}} y^o_n & \le \sum_{o \in \mathcal{O}} y^o_{2n}, && n \notin \mathcal{T}, \label{cozad:grammar4} \\ 
    \sum_{o \in \mathcal{U} \cup \mathcal{L}} y^o_n & \le 1 - \sum_{o \in \mathcal{O}} y^o_{2n}, && n \notin \mathcal{T}, \label{cozad:grammar5} \\
    \sum_{o \in \mathcal{L}} y^o_n & \le 1 - \sum_{o \in \mathcal{O}} y^o_{2n+1}, && n \notin \mathcal{T}. \label{cozad:grammar6}
\end{align}
\end{subequations}

% Eqn.~(\ref{eqn:grammar1}) forces to choose at most one operator at each node.
% Eqn.~(\ref{eqn:grammar2}) forces to include at least one independent variable in the expression tree.
% Eqn.~(\ref{eqn:grammar3}) forces that a binary/unary operator is assigned on node $n$ if and only if its right child (node $2n+1$) must be active.
% Eqn.~(\ref{eqn:grammar3}) forces that a binary operator is assigned on node $n$ if and only if its left child (node $2n$) must be active.
% (\ref{eqn:grammar1}) and (\ref{eqn:grammar2}) are same as (1d) and (1i) in \cite{cozad2018global}.
% (\ref{eqn:grammar1}) and (\ref{eqn:grammar2}) are the improvement of (1e), (1f), (1g), (1h) in \cite{cozad2018global}
% Note that when unary operator is assigned on a node then its left child is active in \cite{cozad2018global}, while its right child is active in our formulation.

\subsection{Value-Defining Constraints}
\label{sec:form-valueconstr}

% These constraints enforce that every $v_{i,n}$ has the value corresponding to the expression tree defined by $y$.
% % For each node $n$ and data point $i$, 
% % These constraints define the value of $v_{i,n}$ by recursive application of operators and operands of the children at node $n$. 
% Specifically, an operand assignment constraint enforces that if operand $o$ is assigned at the node $n$, then $v_{i,n}$ is equal to the value of the operand, otherwise the constraint is freed by big-$M$.
% An operator assignment constraint enforces that if operator $o$ is assigned at node $n$ , then $v_{i,n}$ should be properly computed by the function of its children $v_{i,2n}$ and $v_{i,2n+1}$, otherwise the constraint is freed by big-$M$. 

The constraints are (1b) and (1c) in \cite{cozad2018global}.
% The constraints in \cref{sec:united_var_constr} are the improvement compared to the ones in  \cite{cozad2018global}.
% The constraints in \cref{sec:other_value_constr} are same as the ones in \cite{cozad2018global}.
% We explicitly write the constraints while (1b) and (1c) in \cite{cozad2018global} are written in a disjuctive form.
% % For the sake of simplicity, we use $M$ to represent big-$M$ values. 
% % The big-$M$ value for each constraint can be customized \cite{cozad2018global, neumann2019new}.
% % We introduce further improvement of big-$M$ constraints in \cref{sec:bigM4disj}.
\paragraph{No Assignment}

\begin{subequations}
\begin{align}
    v_{i,n}  & \le v_{\text{up}} (1 - \sum_{o \in \mathcal{O}} y^o_n), && \forall i \in [n_{\text{data}}], ~\forall n \in \mathcal{N}, \label{eqn:noneub} \\
    v_{i,n}  & \ge v_{\text{lo}} (1 - \sum_{o \in \mathcal{O}} y^o_n), && \forall i \in [n_{\text{data}}], ~\forall n \in \mathcal{N}. \label{eqn:nonelb}
\end{align}
\end{subequations}

\paragraph{Independent Variables}
\begin{subequations}
\begin{align}
    v_{i,n}  & \le x_{i,j}y^{x_j}_n + v_{\text{up}}(1-y^{x_j}_n), && \forall i \in [n_{\text{data}}], ~\forall n \in \mathcal{N}, ~\forall j \in [d], \label{eqn:indepub} \\
    v_{i,n}  & \ge x_{i,j}y^{x_j}_n + v_{\text{lo}}(1-y^{x_j}_n), && \forall i \in [n_{\text{data}}], ~\forall n \in \mathcal{N}, ~\forall j \in [d]. \label{eqn:indeplb}
\end{align}
\end{subequations}

\paragraph{Constant}
\begin{subequations}
\begin{align}
    v_{i,n}  - c_n & \le (v_{\text{up}} - c_{\text{lo}}) ( 1 - y^{\text{cst}}_n ), && \forall i \in [n_{\text{data}}], ~\forall n \in \mathcal{N}, \label{eqn:cstub} \\
    v_{i,n}  - c_n & \ge (v_{\text{lo}} - c_{\text{up}}) ( 1 - y^{\text{cst}}_n ), && \forall i \in [n_{\text{data}}], ~\forall n \in \mathcal{N}. \label{eqn:cstlb}
\end{align}
\end{subequations}

\paragraph{Addition}
\begin{subequations}
\begin{align}
    v_{i,n} - (v_{i,2n} + v_{i,2n+1}) & \le (v_{\text{up}} - 2v_{\text{lo}}) (1-y_n^+), && \forall i \in [n_{\text{data}}], ~\forall n \notin \mathcal{T}, \label{eqn:+ub}\\
    v_{i,n} - (v_{i,2n} + v_{i,2n+1}) & \ge (v_{\text{lo}} - 2v_{\text{up}}) (1-y_n^+), && \forall i \in [n_{\text{data}}], ~\forall n \notin \mathcal{T}. \label{eqn:+lb}
\end{align}
\end{subequations}

\paragraph{Subtraction}
\begin{subequations}
\begin{align}
    v_{i,n} - (v_{i,2n} - v_{i,2n+1}) & \le (2v_{\text{up}} - v_{\text{lo}}) (1-y_n^-), && \forall i \in [n_{\text{data}}], ~\forall n \notin \mathcal{T}, \label{eqn:-ub}\\
    v_{i,n} - (v_{i,2n} - v_{i,2n+1}) & \ge (2v_{\text{lo}} - v_{\text{up}}) (1-y_n^-), && \forall i \in [n_{\text{data}}], ~\forall n \notin \mathcal{T}. \label{eqn:-lb}
\end{align}
\end{subequations}

\paragraph{Multiplication}
\begin{subequations}
\begin{align}
    v_{i,n} - v_{i,2n}v_{i,2n+1} & \le (v_{\text{up}} - \min\{v_{\text{lo}}^2,v_{\text{lo}}v_{\text{up}},v_{\text{up}}^2\} ) (1-y_n^*), && \forall i \in [n_{\text{data}}], ~\forall n \notin \mathcal{T}, \label{eqn:*ub}\\
    v_{i,n} - v_{i,2n}v_{i,2n+1} & \ge (v_{\text{lo}} - \max\{v_{\text{lo}}^2,v_{\text{up}}^2 \}) (1-y_n^*), && \forall i \in [n_{\text{data}}], ~\forall n \notin \mathcal{T}. \label{eqn:*lb}
\end{align}
\end{subequations}

\paragraph{Division}
\begin{subequations}
\begin{align}
    v_{i,n}v_{i,2n+1} - v_{i,2n} & \le (\max\{v_{\text{lo}}^2,v_{\text{up}}^2 \} - v_{\text{lo}}) (1-y_n^/), && \forall i \in [n_{\text{data}}], ~\forall n \notin \mathcal{T}, \label{eqn:/ub}\\
    v_{i,n}v_{i,2n+1} - v_{i,2n} & \ge (\min\{v_{\text{lo}}^2,v_{\text{lo}}v_{\text{up}},v_{\text{up}}^2\} - v_{\text{up}}) (1-y_n^/), && \forall i \in [n_{\text{data}}], ~\forall n \notin \mathcal{T}, \label{eqn:/lb} \\ 
    \epsilon y_n^{/} & \le v_{i,2n}^2,  && \forall i \in [n_{\text{data}}], ~\forall n \notin \mathcal{T}, \label{eqn:/domain_lch} \\
    \epsilon y_n^{/} & \le v_{i,2n+1}^2,  && \forall i \in [n_{\text{data}}], ~\forall n \notin \mathcal{T}. \label{eqn:/domain_rch}
\end{align}
\end{subequations}

\paragraph{Square Root}
\begin{subequations}
\begin{align}
    v_{i,n}^2 - v_{i,2n+1} & \le (\max\{v_{\text{lo}}^2,v_{\text{up}}^2\} - v_{\text{lo}}) (1-y_n^{\sqrt{}}), && \forall i \in [n_{\text{data}}], ~\forall n \notin \mathcal{T}, \label{eqn:sqrtub}\\
    v_{i,n}^2 - v_{i,2n+1} & \ge (- v_{\text{up}}) (1-y_n^{\sqrt{}}), && \forall i \in [n_{\text{data}}], ~\forall n \notin \mathcal{T}, \label{eqn:sqrtlb} \\
    \epsilon - v_{i,2n+1} & \le (\epsilon - v_{\text{lo}})(1-y_n^{\sqrt{}}),  && \forall i \in [n_{\text{data}}], ~\forall n \notin \mathcal{T}. \label{eqn:sqrtdomain}
\end{align}
\end{subequations}

\paragraph{Exponential}
\begin{subequations}
\begin{align}
    v_{i,n} - \exp(v_{i,2n+1}) & \le v_{\text{up}} (1-y_n^{\exp}), && \forall i \in [n_{\text{data}}], ~\forall n \notin \mathcal{T}, \label{eqn:expub}\\
    v_{i,n} - \exp(v_{i,2n+1}) & \ge (v_{\text{lo}} - \exp(v_{\text{up}})) (1-y_n^{\exp}), && \forall i \in [n_{\text{data}}], ~\forall n \notin \mathcal{T}. \label{eqn:explb}
\end{align}
\end{subequations}

\paragraph{Logarithm}
\begin{subequations}
\begin{align}
    \exp(v_{i,n}) - v_{i,2n+1} & \le (\exp(v_{\text{up}} - v_{\text{lo}}) (1-y_n^{\log}), && \forall i \in [n_{\text{data}}], ~\forall n \notin \mathcal{T}, \label{eqn:logub} \\
    \exp(v_{i,n}) - v_{i,2n+1} & \ge (- v_{\text{up}}) (1-y_n^{\log}), && \forall i \in [n_{\text{data}}], ~\forall n \notin \mathcal{T}, \label{eqn:loglb} \\
    \epsilon - v_{i,2n+1} & \le (\epsilon - v_{\text{lo}})(1-y_n^{\log}),  && \forall i \in [n_{\text{data}}], ~\forall n \notin \mathcal{T}. \label{eqn:logdomain}
\end{align}
\end{subequations}

% We can improve the constraints by substituting the big-$M$ terms $v_{\text{up}}(1-\sum_{j=1}^d y^{x_j}_n)$ and $v_{\text{lo}}(1-\sum_{j=1}^d y^{x_j}_n)$ for (\ref{eqn.unified_rhs_improve1}) and (\ref{eqn.unified_rhs_improve2}), respectively.
% \begin{subequations}
% \begin{align}
% & c_{\text{up}}y_n^{\text{cst}} + \min \{v_{\text{up}}, \sqrt{v_{\text{up}}}\}y_{n}^{\sqrt{}} + \min \{v_{\text{up}}, \exp(v_{\text{up}})\}y_{n}^{\exp} + \min \{v_{\text{up}}, \log(v_{\text{up}})\}y_{n}^{\log}, \label{eqn.unified_rhs_improve1}\\
% & c_{\text{lo}}y_n^{\text{cst}} + 0 \cdot y_{n}^{\sqrt{}} + 0 \cdot y_{n}^{\exp} + v_{\text{lo}} y_{n}^{\log}. \label{eqn.unified_rhs_improve2}
% \end{align}
% \end{subequations}

\subsection{Redundancy-Eliminating Constraints}
\label{sec:form-redun}

These constraints are (2a), (2b), (3a), (3b), (4a), and (4b) in \cite{cozad2018global}.
% These constraints remove a part of equivalent expression trees except one.
% Therefore, these are not necessary to formulate the problem, however, it helps computation by reducing the searching space.
Define $\mathcal{O}_{\text{pair}}$ as the set of all inverse unary operation pairs $o$ and $o'$ such as $(\exp, \log)$, and $((\cdot)^2, \sqrt{})$.
% Define $\mathcal{N}_{\text{complete}}$ as the set of all nodes in $\mathcal{N}$ that its rooted subtree is a complete binary tree. 
\begin{subequations}
\begin{align}
    y^{\text{cst}}_{2n+1} + \sum_{o \in \mathcal{U}} y^o_n & \le 1, && n \notin \mathcal{T}, \label{cozad:redun1} \\
    y^{\text{cst}}_{2n+1} + y^-_n & \le 1, && n \notin \mathcal{T}, \label{cozad:redun2} \\
    y^{\text{cst}}_{2n+1} + y^/_n & \le 1, && n \notin \mathcal{T}, \label{cozad:redun3} \\
    y^{\text{cst}}_{2n} + y^{\text{cst}}_{2n+1} & \le 1, && n \notin \mathcal{T}, \label{cozad:redun4}\\ 
    y^{o}_{n} + y^{o'}_{2n+1} & \le 1, && n \notin \mathcal{T}, ~(o,o') \in \mathcal{O}_{\text{pair}}, \label{cozad:redun5}\\
    y^{o'}_{n} + y^{o}_{2n+1} & \le 1, && n \notin \mathcal{T}, ~(o,o') \in \mathcal{O}_{\text{pair}}. \label{cozad:redun6}
    % % \left(y^{-}_{n} + y^{/}_{n} + \sum_{o \in \mathcal{U}} y^{o}_{n}\right) +
    % y^{\text{cst}}_{2n+1} & \le y^+_n + y^*_n, && n \notin \mathcal{T}, \label{eqn:redun4} \\
    % y^{+}_{n} + y^{-}_{2n+1} & \le 1, && n \notin \mathcal{N}_{\text{complete}}, \label{eqn:redun5} \\    
    % y^{*}_{n} + y^{/}_{2n+1} & \le 1, && n \notin \mathcal{N}_{\text{complete}}, \label{eqn:redun6}
\end{align}
\end{subequations}
% Eqn.~(\ref{eqn:redun1})-(\ref{eqn:redun3}) are same as (2a), (4a), (4b) in \cite{cozad2018global}.
% Eqn.~(\ref{eqn:redun4})-(\ref{eqn:redun6}) are new.
% Eqn.~(\ref{eqn:redun4}) implies (2b), (3a), (3b) in \cite{cozad2018global}.
% Eqn.~(\ref{eqn:redun5}) restricts $A + (B - C)$, instead, allows equivalent form $A - (C - B)$.
% Similarly, Eqn.~(\ref{eqn:redun6}) restricts $A * (B / C)$, instead, allows equivalent form $A / (C / B)$.
% We cannot restrict $A - (C - B)$ or $A / (C / B)$ because the combination with Eqn.~(\ref{eqn:redun4}) loses some expression trees.   
% It restricts unary operations on a constant, which is (2b) in \cite{cozad2018global}.
% At the same time, it restricts addition or multiplication of a constant, instead, forces to use subtraction or division of a constant.
% Because of Eqn.~(\ref{eqn:redun4}) and Eqn.~(\ref{eqn:redun5}), we should allow subtraction and division of a constant.

% Our formulation still has equivalent expression trees in the feasible space.
% The examples are followings.
% \begin{itemize}
%     \item $(x+5)(x-3) = (x^2) + (2x - 15)$
%     \item $\frac{x+5}{x-3} = 1 - \frac{8}{x-3}$
%     \item $x^2+x+1 = x(x+1) + 1 = x^2 + (x+1)$
%     \item $\log(C_1x) = C_2 + \log(x)$
% \end{itemize}

\subsection{Symmetry-Breaking Constraints}
\label{sec:form-sym}

This constraint is (5) in \cite{cozad2018global}.
\begin{align}
    v_{1,2n} - v_{1,2n+1} & \ge (v_{\text{lo}} - v_{\text{up}}) ( 1 - y^+_n - y^*_n ), && n \in \mathcal{N}_{\text{perfect}} \label{cozad:sym}
\end{align}

% \section{Proofs \label{sec:proof}}

% \subsection{Proof of \cref{lem:grammer} \label{lem:grammer:proof}}

\section{Proof of \cref{lem:multichoice}}
\label{lem:multichoice:proof}

\begin{proof}
We first show \cref{lem:multichoice1}.
Note that $\mathcal{F}_B = \{e_i, ~\forall i \in [m]\},$ where $e_i$ is the $i$th standard unit vector where the $i$th element is one, and  otherwise zero.
Equality~\cref{lem:multichoice1} can be shown by \begin{multline*}
    \{(x,y) \in S ~|~ y \in \mathcal{F}_B\} = \bigcup_{i=1}^m \left(S \cap \{(x,y) ~|~ y=e_i\}\right)\\
     = \bigcup_{i=1}^m \left(T \cap \{(x,y) ~|~ y=e_i\}\right) = \{(x,y) \in T ~|~ y \in \mathcal{F}_B\}.
\end{multline*}
 The equality in the middle holds because of the following observations:
\begin{align*}
S \cap \{(x,y) ~|~ y=e_i\} & = \begin{cases}
\{(x,e_i) ~|~ f(x) \le w_i\} & \mbox{if } i \in [k],\\
\{(x,e_i) ~|~ f(x) \le M\} & \mbox{otherwise},
\end{cases} \\
T \cap \{(x,y) ~|~ y=e_i\} & = \begin{cases}
\{(x,e_i) ~|~ f(x) \le \min\{w_i,M\} = w_i\} & \mbox{if } i \in [k],\\
\{(x,e_i) ~|~ f(x) \le M\} & \mbox{otherwise}.
\end{cases}
\end{align*}

We next show \cref{lem:multichoice2}.
Let $S_C := \{(x,y) \in S ~|~ y \in \mathcal{F}_C\}$ and $T_C := \{(x,y) \in T ~|~ y \in \mathcal{F}_C\}$.
First, it holds that $S_C \subseteq T_C$ because the constraint in $S$ dominates every constraint in $T$.
It is sufficient to show that
\begin{multline*}
\sum_{i \in [k]} w_iy_i + M(1 - \sum_{i \in [k]} y_i)
= w_jy_j + \sum_{i \in [k]: i \neq j} w_iy_i + M(1 - \sum_{i \in [k]} y_i)\\
\le w_jy_j + \sum_{i \in [k]: i \neq j} My_i + M(1 - \sum_{i \in [k]} y_i)
= w_jy_j + M(1-y_j),
\end{multline*}
for all $j \in [k]$.
Second, we show that there exists $(x,y) \in T_C\setminus S_C$.
Let $j = \arg\min_{i\in[k]} w_i$.
Consider $(\bar{x},\bar{y})$ with $\bar{x} = \frac{1}{m}w_j+\frac{m-1}{m}M$ and $\bar{y}_i = \frac{1}{m}$ for all $i$.
Point $(\bar{x},\bar{y})$ is in $\mathcal{F}_C$. 
Point $(\bar{x},\bar{y})$ is not in $S$ because
\begin{multline*}
\bar{x} - \left(\sum_{i \in [k]} w_i\bar{y}_i + M(1 - \sum_{i \in [k]} \bar{y}_i)\right)\\
= \left(\frac{1}{m}w_j+\frac{m-1}{m}M\right) - \left(\frac{\sum_{i \in [k]} w_i}{m} + \frac{m-k}{m}M\right) =
\frac{\sum_{i \in [k]:i\neq j} (M - w_i)}{m} > 0,
\end{multline*}
while the point is in $T$ because 
\begin{multline*}
     \bar{x} - \left( w_i\bar{y}_i + M(1-\bar{y}_i)\right) = \left(\frac{1}{m}w_j+\frac{m-1}{m}M\right) - \left( \frac{1}{m}w_i + \frac{m-1}{m}M\right) \\
     = \frac{1}{m}(w_j - w_i) \le 0
\end{multline*}
for all $i \in [k]$.
Therefore, $(\bar{x},\bar{y}) \in T_C \setminus S_C$, which completes the proof.
\end{proof}
%\section{Computational experiment set-up}
%
%In computational experiments, we use the following parameters.
%
%\begin{itemize}
%\item $(k_{\text{init}},k_{\max}) = (3,7)$
%\item $(\beta_{\text{init}}, \beta_{\max}) = (1,2)$
%\item $(\gamma_{\text{init}}, \gamma_{\max}) = (10^4, 10^5)$
%\end{itemize}

%\subsection{AI Feynman}
%
%We download AI Feynman code from \url{https://github.com/SJ001/AI-Feynman} on 08/31/2020. 
%The parameters are set as follows:
%
%\begin{itemize}
%\item Time to try brute-force (\texttt{BF\_try\_time}): 60 seconds
%\item The degree for polynomial fitting (\texttt{polyfit\_deg}): 3
%\item The number of epochs for neural net training (\texttt{NN\_epochs}): 4000 epochs
%\item The set of operators(\texttt{BF\_ops\_file\_type}):
%%+*-D><~I 
%\begin{itemize}
%\item add ($+$), multiply ($*$), subtract ($-$), divide ($/$),
%\item increment ($x \rightarrow x+1$),
%\item decrement ($x \rightarrow x-1$),
%\item negate $x \rightarrow -x$,
%\item invert $x \rightarrow 1/x$,
%\item sqrt ($\sqrt{}$), log,exp
%\item $\sin$, $\cos$, $\abs$ $\arcsin$, $\arctan$
%\item pi ($\pi$), 0, 1
%\end{itemize}
%\end{itemize}
%We use the default parameters except the set of operators used in brute-force since the default set does not include all operators used in tested functions.
%Instead, we use the largest operator set that includes all used operators.

%run_aifeynman("", "data.obs", BF_try_time, ops + ".txt", polyfit_deg=3, NN_epochs=500)

%\includegraphics[scale=1]{example-image}

% \section*{Acknowledgments}
% We would like to acknowledge the assistance of volunteers in putting
% together this example manuscript and supplement.

\bibliographystyle{siamplain}
\bibliography{references}

\vfill
\noindent
\fbox{\parbox{0.98\textwidth}{\small The submitted manuscript has been created by UChicago Argonne, LLC, Operator of Argonne National Laboratory ("Argonne”). Argonne, a U.S. Department of Energy Office of Science laboratory, is operated under Contract No. DE-AC02-06CH11357. The U.S. Government retains for itself, and others acting on its behalf, a paid-up nonexclusive, irrevocable worldwide license in said article to reproduce, prepare derivative works, distribute copies to the public, and perform publicly and display publicly, by or on behalf of the Government. The Department of Energy will provide public access to these results of federally sponsored research in accordance with the DOE Public Access Plan (\url{http://energy.gov/downloads/doe-public-access-plan}).}}

\end{document}